\newtheorem*{thma}{Theorem A}
\newtheorem*{thmb}{Theorem B}
\newtheorem{thm}{Theorem}
\newtheorem{lem}[thm]{Lemma}
\newtheorem{prop}[thm]{Proposition}
\newtheorem{defn}[thm]{Definition}
\newtheorem{ex}{Example}
\newtheorem{rem}[thm]{Remark}
\numberwithin{thm}{section}
\numberwithin{equation}{section}
\newcommand{\la}{\langle}
\newcommand{\ra}{\rangle}
\newcommand{\A}{\mathcal{A}}
\newcommand{\B}{\mathcal{B}}
\newcommand{\Comp}{\mathbb{C}}
\newcommand{\F}{\mathcal{F}}
\newcommand{\z}{\mathbb{Z}}
\newcommand{\om}{\omega}
\newcommand{\prt}{\widehat{\otimes}}
\newcommand{\injt}{\otimes_{\min}}
\begin{document}
\title{New deformations of convolution algebras and Fourier algebras on locally compact groups}

\author{Hun Hee Lee}
\author{Sang-gyun Youn}

\address{Hun Hee Lee : Department of Mathematical Sciences, Seoul National University,
San56-1 Shinrim-dong Kwanak-gu, Seoul 151-747, Republic of Korea}
\email{hunheelee@snu.ac.kr}

\address{
Sang-gyun Youn: Department of Mathematical Sciences, Seoul National University,
San56-1 Shinrim-dong Kwanak-gu, Seoul 151-747, Republic of Korea}
\email{yun87654@snu.ac.kr}

\keywords{Fourier algebra, Convolution algebra, operator algebra, Beurling algebra}
\thanks{2000 \it{Mathematics Subject Classification}.
\rm{Primary 43A20, 43A30; Secondary  47L30, 47L25}\\
\rm{The first named author is supported by the Basic Science Research Program through the National Research Foundation of Korea (NRF), grant NRF-2015R1A2A2A01006882.
The second named author is supported by TJ Park Science Fellowship.}}

\begin{abstract}
In this paper we introduce a new way of deforming convolution algebras and Fourier algebras on locally compact groups. We demonstrate that this new deformation allows us to reveal some informations of the underlying groups by examinining Banach algebra properties of deformed algebras. More precisely, we focus on representability as an operator algebra of deformed convolution algebras on compact connected Lie groups with connection to the real dimension of the underlying group. Similary, we investigate complete representability as an operator algebra of deformed Fourier algebras on some finitely generated discrete groups with connection to the growth rate of the group. 
\end{abstract}

\maketitle

\section{Introduction}
For a locally compact group $G$ it has been a long tradition to investigate its associated Banach algebras, namely the convolution algebra $L^1(G)$ and the Fourier algebra $A(G)$, in the hope that we could find connections between Banach algebraic properties of $L^1(G)$ (or $A(G)$) and the group properties of $G$. This line of research is based on the fundamental result of Wendel (\cite{Wen}) (resp. Walter (\cite{Walter})) saying that for two locally compact groups $G$ and $H$, the algebras $L^1(G)$ and $L^1(H)$ (resp. $A(G)$ and $A(H)$) are isometrically isomorphic if and only if $G$ and $H$ are isomorphic as topological groups. Of course, making concrete connections between two objects is a completely different task, and here is one of the most succesful examples of such connections. The celebrated results by B.E. Johnson (\cite{Joh}) and Z.-J. Ruan (\cite{Ruan}) tell us that $L^1(G)$ is amenable as a Banach algebra if and only if $G$ is amenable if and only if $A(G)$ is operator amenable as a completely contractive Banach algebra. Recall that $G$ is called amenable if $L^\infty(G)$ has a left invariant mean and a (completely contractive) Banach algebra $\A$ is called (operator) amenable if every (completely) bounded derivation $D:\A \to X^*$ for any (operator) $\A$-bimodule $X$ is inner (i.e. there is $\phi\in X^*$ such that $D(a) = \phi \cdot a - a \cdot \phi$, $a\in \A$).

The list of such connections continues, but at the same time there certainly are limitations. One of the attempts we could try at this moment is to consider modified versions of $L^1(G)$ and $A(G)$ expecting further connections between group properties and Banach algebraic properties. The construction of weighted convolution algebras begins with a choice of Borel measurable (or continuous) weight function $w : G \to (0,\infty)$ which is sub-multiplicative (i.e. $w(xy) \le w(x)w(y)$, $x,y\in G$ a.e.) Now we consider the weighted $L^1$ space
	$$L^1(G,w) := \{f: G \to \Comp\; |\; ||f||_{L^1(G,w)} :=\int_G |f(x)|w(x) dx <\infty \}.$$
The sub-multiplicativity of $w$ ensures that $L^1(G,w)$ is still a Banach algebra with respect to the convolution product. In other words, weighted convolution algebras are obtained by modifying the norm structure via {\it multiplying the weight function} when we calculate the $L^1$-norm but essentially keeping the same algebra multiplication, which is convolution in this case. As is expected there are results establishing connections between weighted algebras and the groups. Recall that a Banach algebra $\A$ is called {\it representable as an operator algebra} if there is an operator algebra $\B$ (i.e. a closed subalgebra of $B(H)$ for some Hilbert space $H$) and a bijective bounded isomorphism $T : \A \to \B$ with bounded inverse $T^{-1}$. We define {\it complete representability as an operator algebra} of a completely contractive Banach algebra similarly. In \cite{LSS} it is proved that $\ell^1(G, \om_\beta)$ is representable as an operator algebra if $\beta > \frac{k_0+1}{2}$, where $G$ is a finitely generated discrete group with polynomial growth of order $k_0$. Note that weighted convolution algebras, in general, have been studied extensively. See \cite{Gr, DL} and the references therein, for example.

The corresponding story for Fourier algebras has begun quite recently by Ludwig/Spronk/Turowska (\cite{LST}) and Lee/Samei (\cite{LS}). Weighted Fourier algebras follow the same philosophy of modification with more involved technicalities, and there are results connecting Banach algebraic properties and group properties. In \cite{GLSS} it is proved that for a compact connected Lie group $G$ the weighted Fourier algebra $A(G, w)$ is completely representable as an operator algebra if $w$ is a ``polynomially growing weight'' whose growth of order is strictly greater than $\frac{d(G)}{2}$, where $d(G)$ is the real dimension of the Lie group $G$. These results show us that group informations of $G$ such as polynomial growth rate or real Lie group dimension are reflected in weighted convolution (Fourier) algebras.

The current paper deals with a different way of modifying $L^1(G)$ and $A(G)$. The main difference from weighted versions is that we would like to {\it multiply a certain fixed ``function'' to the Fourier transform} of the given function. Suppose for the moment that $G$ is abelian and consider a Borel measurable $w: \widehat{G} \to (0,\infty)$, where $\widehat{G}$ is the dual group of $G$. Then, we define the deformed $L^1$-norm by
	$$||| f ||| := ||\F^{-1}(w\cdot \F(f))||_{L^1(G)},$$	
for ``nice'' functions $f \in L^1(G)$, where $\F$ is the group Fourier transform on $G$. If we set $g= \F^{-1}(w\cdot \F(f))$, then $f = \F^{-1}(\frac{1}{w}\cdot \F(g))$. In other words we are looking at the map
	$$\Phi: L^1(G) \to L^1(G),\;  g \mapsto \F^{-1}(\frac{1}{w}\cdot \F(g))$$
and $|||f||| = ||g||_{L^1(G)}$ with $f = \Phi(g)$. Of course, we want $\Phi$ to be well-defined and bounded, which means that $\Phi$ is nothing but a (Fourier) multiplier on $L^1(G)$. From a classical result on multipliers on $L^1(G)$ we know that there must be a complex measure $\mu\in M(G) \cong ML^1(G)$ such that
	$$\Phi(g) = \mu * g,\; g\in L^1(G).$$ 
This informal observation suggests us the deformed $L^1$-space $L^1_\mu(G)$ given by
	$$L^1_\mu(G) = \mu*L^1(G)$$
with the norm $||\mu*g||_\mu = ||g||_{L^1(G)},\; g\in L^1(G)$. This definition can be easily extended to the case of general locally compact groups.  The case of deformed Fourier algebra follows the same idea, so that we begin with an element in the multiplier algebra $M_{cb}A(G)$. See the detailed rigorous definitions in Section \ref{sec-construction}.

Given the above new deformations we would like to focus again on (complete) representability as an operator algebra expecting that we could extract similar informations on the underlying groups. Indeed, we prove the following results in this paper. Let $G$ is a compact connected Lie group and $\nu_\alpha$ is the probability measure whose Fourier coefficients are polynomially decreasing of order $\alpha$ on $\widehat{G}$ (see Section \ref{subsec-compact-connected} for the precise definitions).
\begin{thma}
Let $G$ is a compact connected Lie group. The algebra $L^1_{\nu_\alpha}(G)$ is (completely) representable as an operator algebra if and only if $\alpha > \frac{d(G)}{2}$, where $d(G)$ is the real dimension of $G$.
\end{thma}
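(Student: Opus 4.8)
\emph{Proof proposal.} I would first isolate the one analytic fact that carries the whole statement, namely
\[
\nu_\alpha\in L^2(G)\iff \sum_{\pi\in\widehat{G}} d_\pi^{\,2}\,\langle\pi\rangle^{-2\alpha}<\infty\iff \alpha>\frac{d(G)}{2},
\]
where $d_\pi=\dim\pi$, $\langle\pi\rangle^2=1+\lambda_\pi$, and $\lambda_\pi$ is the eigenvalue of the Laplace--Beltrami operator on the $\pi$-isotypic subspace of $L^2(G)$. The first equivalence is Plancherel on $G$ (that subspace has dimension $d_\pi^{\,2}$ and, under the natural normalisation, $\widehat{\nu_\alpha}(\pi)=\langle\pi\rangle^{-\alpha}I_{d_\pi}$, so $\|\nu_\alpha\|_{L^2(G)}^2=\sum_\pi d_\pi^{\,2}\langle\pi\rangle^{-2\alpha}$); the second is Weyl's asymptotic law for the eigenvalue counting function of the Laplacian on the $d(G)$-dimensional manifold $G$, the eigenvalue $\lambda_\pi$ occurring with multiplicity $d_\pi^{\,2}$. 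Thus Theorem~A reduces to showing that $L^1_{\nu_\alpha}(G)$ is (completely) representable as an operator algebra exactly when the deforming measure is square integrable; this is where I would invoke, or if necessary prove in Section~\ref{sec-construction}, the general principle that for a central $\mu\in M(G)$ with non-vanishing Fourier transform, $L^1_\mu(G)$ is completely representable as an operator algebra iff $\mu\in L^2(G)$.

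For the sufficiency direction, assume $\nu_\alpha\in L^2(G)$. Since $\widehat{\nu_\alpha}(\pi)$ is scalar for every $\pi$, $\nu_\alpha$ is central in $M(G)$, and I would split $\nu_\alpha=\nu_\beta*\nu_{\alpha/2}*\nu_{\alpha/2-\beta}*\dots$; more simply, write $\nu_\alpha=\nu_{\alpha/2}*\nu_{\alpha/2}$. On the canonical operator space $L^1_{\nu_\alpha}(G)\cong L^1(G)$ (predual of $L^\infty(G)$, hence the maximal operator space over $L^1(G)$; the identification being $g\mapsto\nu_\alpha*g$), the deformed product becomes $g_1\star g_2=\nu_\alpha*g_1*g_2=(\nu_{\alpha/2}*g_1)*(\nu_{\alpha/2}*g_2)$, so it factors as ordinary convolution precomposed with the completely contractive maps $\nu_{\alpha/2}*(\cdot)$ in each variable (bounded maps out of a maximal operator space are automatically completely bounded). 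The plan is then to estimate the completely bounded norm of this multiplication on the Haagerup tensor product: passing to the Fourier side, $L^1_{\nu_\alpha}(G)$ becomes a weighted space of matrix tuples with blockwise matrix multiplication against the weights $\langle\pi\rangle^{-\alpha}$, and, distributing a factor $\langle\pi\rangle^{-\alpha/2}$ into a row factorisation and another $\langle\pi\rangle^{-\alpha/2}$ into a column factorisation, the bound should come out equal (up to a universal constant) to $\big(\sum_\pi d_\pi^{\,2}\langle\pi\rangle^{-2\alpha}\big)^{1/2}=\|\nu_\alpha\|_{L^2(G)}<\infty$. A Blecher--Ruan--Sinclair type characterisation (a Banach algebra whose multiplication is completely bounded on the Haagerup tensor product is, after a constant renormalisation, completely isomorphic to an operator algebra) then gives complete representability, hence also representability.

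For the necessity direction I only need to exclude mere representability, assuming $\alpha\le d(G)/2$, i.e. $\sum_\pi d_\pi^{\,2}\langle\pi\rangle^{-2\alpha}=\infty$. If $L^1_{\nu_\alpha}(G)$ were representable as an operator algebra it would obey the universal inequality of operator algebras at the first matrix level: for some $C$ and all finite families, a product of a row by a column is bounded by the product of the row norm and the column norm. I would build the test elements from the matrix coefficients $u^\pi_{k\ell}(x)=\sqrt{d_\pi}\,\pi(x)_{\ell k}$: for a finite, conjugation-closed $F\subseteq\widehat{G}$ these span inside $L^1_{\nu_\alpha}(G)$ mutually orthogonal blocks in which matrix-unit products stay of size $1$, while the $L^1_{\nu_\alpha}$-norm of a matrix unit in the $\pi$-block is of order $\langle\pi\rangle^{\alpha}$. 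Assembled over $F$ into a row and a column, the two operator-space norms are controlled (using the maximal structure and the completely contractive homomorphism $g\mapsto\lambda(\nu_\alpha*g)$ into $B(L^2(G))$), whereas the resulting product reproduces, up to constants, $\sum_{\pi\in F}d_\pi^{\,2}\langle\pi\rangle^{-2\alpha}$; letting $F\uparrow\widehat{G}$ contradicts the inequality.

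The main obstacle is the sufficiency direction, and specifically obtaining the \emph{sharp} threshold $d(G)/2$. Crude estimates -- the triangle inequality on $A(G)=L^2(G)*L^2(G)$, or Cauchy--Schwarz -- only yield the weaker sufficient condition $\alpha>d(G)$; pushing down to $\alpha>d(G)/2$ forces genuine use of the row/column structure of the Haagerup tensor product together with the symmetric splitting $\nu_\alpha=\nu_{\alpha/2}*\nu_{\alpha/2}$, so that the controlling quantity is $\|\nu_\alpha\|_{L^2(G)}$ rather than $\|\nu_{\alpha/2}\|_{L^2(G)}$. It is also worth stressing why no shortcut via a single representation exists: once $\nu_\alpha\in L^2(G)$ both $f\mapsto\lambda(f)$ on $L^2(G)$ and the continuous injective homomorphism $L^1_{\nu_\alpha}(G)\hookrightarrow L^2(G)$ are available, but neither is bounded below -- the characters $\langle\pi\rangle^{-\alpha}\chi_\pi$ already show this -- so the isomorphism with an operator algebra cannot be implemented by one Hilbert space representation, and the tensor-norm computation is unavoidable.
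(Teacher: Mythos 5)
Your high-level architecture is the same as the paper's: reduce Theorem A to the claim that, for a central deformation measure, $L^1_\mu(G)$ is (completely) representable as an operator algebra iff $\mu\in L^2(G)$, and then translate $\nu_\alpha\in L^2(G)$ into $\sum_{\pi\in\widehat{G}}d_\pi^2(1+\kappa_\pi)^{-\alpha}<\infty$, i.e. $\alpha>\frac{d(G)}{2}$ (the paper quotes Dasgupta--Ruzhansky here rather than Weyl's law, which is a cosmetic difference). But both halves of your proof of the $L^2$ criterion have genuine gaps. For sufficiency, the symmetric splitting $\nu_\alpha=\nu_{\alpha/2}*\nu_{\alpha/2}$ does \emph{not} produce the sharp threshold; it produces exactly the crude one you say you want to avoid. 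If you distribute a factor $(1+\kappa_\pi)^{-\alpha/4}$ into each leg of the row--column factorisation, the column norm comes out as $\bigl(\sum_\pi d_\pi^2(1+\kappa_\pi)^{-\alpha/2}\bigr)^{1/2}=\|\nu_{\alpha/2}\|_{L^2(G)}$, finite only for $\alpha>d(G)$. The controlling quantity $\|\nu_\alpha\|_{L^2(G)}$ appears only with the \emph{asymmetric} factorisation the paper uses: write $(\widetilde{m_\mu})^*(\pi_{ij})=\sum_k\pi_{ik}\otimes[\widehat{\mu}(\bar\pi)\pi]_{kj}$, put all of $\widehat{\mu}$ into the column leg (whose norm is then exactly $\|\mu\|_{L^2(G)}$), and control the row leg by $\|F\|_{M_n(L^\infty(G))}$ with no decay at all, using only the unitarity relation $\sum_k\pi_{ik}\overline{\pi_{i'k}}=\delta_{i,i'}1_G$. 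The mechanism is the opposite of the one you describe.

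For necessity, your row-times-column test asserts the hard step rather than proving it. The crux is to show that the row and column norms of your families of matrix coefficients stay bounded while the diagonal products accumulate $\sum_{\pi\in F}d_\pi^2(1+\kappa_\pi)^{-\alpha}$; for nonabelian $G$ the $L^1$-norm of a single coefficient $\pi_{\ell k}$ is only pinned between $d_\pi^{-1}$ and $d_\pi^{-1/2}$, so ``of order $\langle\pi\rangle^{\alpha}$'' is not yet an estimate, and the promised control ``using the maximal structure'' is precisely where all the work lies. The paper closes this gap with Helgason's compact-group version of Littlewood's theorem (Theorem \ref{thm-Helgason}): testing the extended-Haagerup image of the deformed comultiplication against the isometries $T_U(\pi_{ij})=[\pi^*U^\pi]_{ij}$ shows that $\sum_\pi c_{\bar\pi}d_\pi\mathrm{Tr}([U^\pi]^t\pi^t)$ lies in $M(G)$ uniformly over all unitary choices $U$, whence $\sum_\pi d_\pi^2|c_\pi|^2<\infty$; alternatively one can use random signs, cotype $2$ of $M(G)\cong C(G)^*$ and Price's lower bound $\|\chi_\pi\|_1\ge C'$ as in Remark \ref{rem-main-L1}. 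You would also need the Banach-level version of the row--column inequality for an algebra that is merely representable (not completely), which is not automatic and is supplied by \cite[Theorem 2.8]{LSS}. Without a substitute for Helgason's theorem, your necessity argument is incomplete.
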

We also have a corresponding result for the dual setting. Let $G$ be a finitely generated discrete group belonging to a certain class of groups (more precisely, $G$ is either $\z^n$, a Coxeter group, or a hyperbolic group). Let $W_\alpha$ and $w_t$ be the functions in $M_{cb}A(G)$ decreasing polynomially of order $\alpha$ and exponentially, respectively, with respect to the canonical word length function. See Section \ref{subsec-f.g.} for the precise definition. Then we have the following result.
\begin{thmb}
Let $G$ be a discrete group described in the above.
	\begin{enumerate}
		\item
		Suppose $G$ is of polynomial growth of order $k_0$. The algebra $A_{W_\alpha}(G)$ is completely representable as an operator algebra if and only if $\alpha > \frac{k_0}{2}$.
		\item
		Suppose that $G$ is exponential growing with the growth rate $\lambda$. The algebra $A_{w_t}(G)$ is completely representable as an operator algebra if  $t> \frac{\log \lambda}{2}$ and $A_{w_t}(G)$ is not completely representable as an operator algebra if  $t< \frac{\log \lambda}{2}$.
	\end{enumerate}	
\end{thmb}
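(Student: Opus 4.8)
\emph{The plan.} The strategy is to translate complete representability into a completely bounded estimate on a Haagerup tensor product, to identify the two stated thresholds as an $\ell^2$-condition on the deforming function, and then to establish sufficiency by an operator-space argument using the geometry of these groups and necessity by explicit destabilising families built from balls in the Cayley graph. First I would record that, by construction, $u\mapsto W_\alpha u$ is a complete isometry of $A(G)$ onto $A_{W_\alpha}(G)$, under which the latter becomes the operator space $A(G)$ carrying the deformed product $u\star v:=W_\alpha uv$ (and $k$-fold product $(u_1,\dots,u_k)\mapsto W_\alpha^{\,k-1}u_1\cdots u_k$), and similarly for $w_t$. By the Blecher--Ruan--Sinclair characterisation of operator algebras --- in its form for algebras \emph{completely isomorphic}, rather than completely isometric, to operator algebras --- $A_{W_\alpha}(G)$ is completely representable as an operator algebra exactly when the deformed multiplication extends to a completely bounded map $m_{W_\alpha}\colon A(G)\otimes_h A(G)\to A(G)$; dually, $m_w$ is cb iff $m_w^{*}\colon VN(G)\to CB(A(G),VN(G))\cong(A(G)\otimes_h A(G))^{*}$, $x\mapsto\big(u\mapsto (wu)\cdot x\big)$, is cb, where $\cdot$ is the module action of $A(G)$ on $VN(G)$.

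Next, writing $B_n$ for the word-metric balls and $S_n=B_n\setminus B_{n-1}$ for the spheres, polynomial growth of order $k_0$ gives $|S_n|\asymp n^{k_0-1}$ and hence $\|W_\alpha\|_{\ell^2(G)}^2\asymp\sum_n n^{k_0-1}(1+n)^{-2\alpha}$, finite iff $\alpha>\tfrac{k_0}{2}$, whereas exponential growth with rate $\lambda$ gives $|B_n|\asymp\lambda^n$ and hence $\|w_t\|_{\ell^2(G)}^2\asymp\sum_n\lambda^n e^{-2tn}$, finite iff $t>\tfrac{\log\lambda}{2}$. Thus both parts reduce to showing that $m_w$ is cb when $w\in\ell^2(G)$ and fails to be cb when $w\notin\ell^2(G)$ (in the subcritical range). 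When $G=\z^n$ this can be made fully explicit: $A(\z^n)\cong L^1(\tor^n)$, the deformation is then the deformed \emph{convolution} algebra $\nu_\alpha\ast L^1(\tor^n)$, and both directions follow from the torus case of Theorem~A; the genuinely new work is therefore for Coxeter and hyperbolic $G$.

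For sufficiency ($w\in\ell^2(G)$) I would estimate $m_w^{*}$ by decomposing $w=\sum_k w\mathbf{1}_{S_k}$ along spheres and bounding the sphere-supported pieces uniformly in the matrix level. This is where the geometry enters: for $\z^n$, Coxeter groups and hyperbolic groups one has property~RD (rapid decay), which converts the weighted $\ell^2$-data of each $w\mathbf{1}_{S_k}$ into the $VN(G)$-valued norm bounds needed to sum over $k$, the precise polynomial (resp.\ exponential) shape of $W_\alpha$ (resp.\ $w_t$) providing exactly the room that makes the $k$-sum converge.

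For necessity one assumes $m_w$ cb --- hence, via the first paragraph, a uniform inequality $\|W_\alpha^{\,k-1}u_1\cdots u_k\|_{A(G)}\le C_1C_2^{\,k}\|u_1\otimes\cdots\otimes u_k\|_{A(G)^{\otimes_h k}}$ for the iterated products --- and derives a contradiction from families adapted to the balls $B_N$: characteristic functions $\mathbf{1}_{B_N}$, or the ``diagonal'' elements $[\lambda_{gh^{-1}}]_{g,h\in B_N}\in M_{|B_N|}(VN(G))$, playing the $\ell^2$-failure of $w|_{B_N}$ against the $A(G)$- and $VN(G)$-norm growth of these families. The main obstacle is exactly this last step in the non-abelian cases: since the product of $A_{W_\alpha}(G)$ is pointwise, powers of a single function do not spread out, so the destabilisation must be produced at the matrix level, which demands sharp two-sided control of $A(G)$- and $VN(G)$-norms over balls --- again available only through property~RD --- with the growth of $W_\alpha$, $w_t$ matched exactly to the growth of $G$. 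This tightness is also why the critical rate $t=\tfrac{\log\lambda}{2}$ is left unresolved: there $w_t$ misses $\ell^2$-summability only by a logarithmic factor, so the RD-type estimates have no room to spare on either side.
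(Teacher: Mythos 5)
Your reduction is the right skeleton and matches the paper's: by Blecher's characterisation, complete representability is equivalent to complete boundedness of the deformed multiplication on $\otimes_h$, and the growth computation correctly identifies $\alpha>\tfrac{k_0}{2}$ (resp.\ $t>\tfrac{\log\lambda}{2}$) with $W_\alpha\in\ell^2(G)$ (resp.\ $w_t\in\ell^2(G)$). Two caveats on that part: you should argue from $|B_n|\asymp n^{k_0}$ via Abel summation rather than from $|S_n|\asymp n^{k_0-1}$, which need not hold sphere by sphere; and the only place the hypothesis ``$\z^n$, Coxeter or hyperbolic'' actually enters is to guarantee that $w_t\in M_{cb}A(G)$ with $\sup_t\|w_t\|_{M_{cb}A(G)}<\infty$, so that $W_\alpha$ and $w_t$ are legitimate deformation functions --- not, as you suggest, to power the cb estimates.

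The genuine gap is in the core equivalence ``$\widetilde{m_w}$ cb $\iff w\in\ell^2(G)$'', which holds for \emph{every} discrete group and uses no property RD in either direction. For sufficiency no sphere decomposition is needed: dually, $(\widetilde{m_w})^*(T)=\sum_g w(g)\alpha_g\,\lambda_g\otimes\lambda_g$ factors as a single row--column product in $VN(G)\otimes_{eh}VN(G)$, the row $(\alpha_g\lambda_g)_g$ having norm $\bigl\|\sum_g \alpha_g\overline{\alpha_g}\bigr\|^{1/2}\le\|T\|$ and the column $(w(g)\lambda_g)_g$ having norm exactly $\|w\|_{\ell^2(G)}$; this two-line estimate is what your RD argument would, at best, reprove in special cases. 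More seriously, your necessity direction is not a proof: testing against $\mathbf{1}_{B_N}$ or $[\lambda_{gh^{-1}}]_{g,h\in B_N}$ is never shown to produce a lower bound of order $\|w\|_{\ell^2(B_N)}$, and RD cannot be the engine in part (1), since polynomial-growth groups are amenable and $\|\sum_{g\in B_N}\lambda_g\|_{VN(G)}\asymp|B_N|$, which destroys the naive ball estimate. The missing idea is a solution of the coefficient problem in $VN(G)$: given $(c_g)\in\ell^2(G)$ of norm one, one needs $T\in VN(G)$ of uniformly bounded norm with $|\tau(T\lambda_g^*)|\ge|c_g|$ for all $g$ --- Lust-Piquard's noncommutative Kahane--Katznelson--de Leeuw theorem --- after which pairing $(\widetilde{m_w})^*(T)$ with the signed flip $X_r\delta_g=r_g\delta_{g^{-1}}$ gives $\|(\widetilde{m_w})^*\|\ge K^{-1}\sum_g|w(g)c_g|$ and hence $\ge K^{-1}\|w\|_{\ell^2(G)}$. (A randomised substitute using cotype~$2$ of $(C^*_r(G))^*$ also works, but some such input is indispensable; without it the ``only if'' halves of both (1) and (2) remain unproved.)
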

These results also show us that the polynomial (or exponential) growth rate of some finitely generated groups and the real dimension of compact connected Lie groups can be {\it precisely detected} by examining complete representability as an operator algebra of the corresponding deformed algebras.

There are a few advantages of new deformations compared to the weighted versions. First, some informations on the groups can be {\it precisely detected} as the above theorems, whilst we only have partial results in the theory of weighted algebras (\cite{GLSS, LSS}). Secondly, new deformations can be applied for arbitrary locally compact groups in contrast to fact that the weighted versions had some limitations on the choice of groups. For example, when $G$ is compact the weighted convolution algebra $L^1(G,w)$ is isomorphic to $L^1(G)$ as Banach algebras since every weight function $w$ is known to be (\cite[Lemma 1.3.3]{Ka}) bounded and bounded away from zero (in other words, equivalent to the constant 1 function). For the same reason weighted Fourier algebras on discrete groups have never been investigated. However, we do not have this restriction for $L^1_\mu(G)$ and $A_w(G)$. In this paper we usually focus on the compact $G$ for $L^1_\mu(G)$ and discrete $G$ for $A_w(G)$, but the general case will be discussed in a subsequent paper.

This paper is organized as follows. In Section 2 we collect some preliminaries we need. In Section 3 we define deformed algebras $L^1_\mu(G)$ and $A_w(G)$ for a locally compact group $G$. In Section 4 we focus on $L^1_\mu(G)$ for a compact group $G$ and prove that representability as an operator algebra is closely related to the square-integrability of the deformation measure $\mu$. Moreover, we apply this to establish connections between representability as an operator algebra of $L^1_\mu(G)$ and the dimension of $G$ when $G$ is a compact connected Lie group. In Section 5 we turn our attetion to the case of $A_w(G)$ for discrete groups. We also prove a general result saying that representability as an operator algebra is equivalent to the square-summability of the deformation function $w$. We apply this to a certain class of finitely generated groups and show that complete representability as an operator algebra of $A_w(G)$ is closely related to the growth rate of $G$.

\section{Preliminaries}

\subsection{Operator spaces}

We will assume that the reader is familiar with standard operator space theory including injective, projective and Haggerup tensor products of operator spaces, whcih we denote by $\injt$, $\prt$ and $\otimes_h$, respectively. We will also frequently use a dual version of Haagerup tensor product, namely the extended Haggerup tensor product. The extended Haggerup tensor product of dual operator spaces $E^*$ and $F^*$ will be denoted by
	$$E^*\otimes_{eh} F^*$$
which is given by $(E\otimes_h F)^*$ in \cite{BS}. There are several characterizations of $\otimes_{eh}$, but we will only be using the following two aspects. First, for $X\in M_n(E^*\otimes_{eh} F^*)$ we have,
	$$ ||X||_{eh}=\min\left \{\left \|A\right \| \left \|B \right \| \right \},$$ where the minimums runs over all possibile factorization satisfying $\displaystyle X= A\odot B$ with $A\in M_{n,I}(E^*)$ and $B\in M_{I,n}(F^*)$ and $\odot$ is the Haagerup product given by $(A_1\otimes A_2)\odot (B_1\otimes B_2) = A_1B_1\otimes A_2\otimes B_2$ for $A_1\in M_{n,I},B_1\in M_{I,n}$ and $A_2\in E^*$, $B_2\in F^*$. Note that the index set $I$ could be arbitrary. See \cite[Theorem 2.4]{Spr}. Secondly, if $E^* \subseteq B(H)$ for some Hilbert space $H$, then we have a completely isometric embedding
	$$E^*\otimes_{eh} F^* \hookrightarrow CB^\sigma(B(H), B(H)),\; A\otimes B \mapsto T_{A,B},$$
where $CB^\sigma(B(H), B(H))$ refers to the space of all $w^*$-$w^*$-continous completely bounded maps and $T_{A,B}(X) = AXB$, $X\in B(H)$.
	
A Banach algebra $\A$ with the algebra multiplication map
	$$m : \A \otimes_\gamma \A \to \A,$$
where $\otimes_\gamma$ is the projective tensor product of Banach spaces, is called a {\it completely contractive Banach algebra} if $\A$ is endowed with an operator space structure and the map $m$ extends to a complete contraction
	$$m : \A \prt \A \to \A.$$
Note that any operator algebra carries a natural operator space structure, which makes it a completely contractive Banach algebra. Operator algebras form a quite distinctive class of completely contractive Banach algebras. In the category of completely contractive Banach algebras we have the following characterization of operator algebras by Blecher (\cite{B95}).

	\begin{thm}
	Let $\A$ be a completely contractive Banach algebra with the algebra multiplication
		$$m : \A \prt \A \to \A.$$
Then, $\A$ is completely representable as an operator algebra if and only if the multiplication map extends to a completely bounded map
		$$m : \A \otimes_h \A \to \A.$$
	\end{thm}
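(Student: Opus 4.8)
The plan is to prove Blecher's characterization by reducing ``completely representable as an operator algebra'' to the statement that the multiplication map is completely bounded on the Haagerup tensor product, using the Christensen--Effros--Sinclair (CES) style representation theorem for completely bounded bilinear maps together with an abstract Arveson--Wittstock type extension argument.

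\medskip\noindent\textbf{The forward direction.} First I would assume that $\A$ is completely representable as an operator algebra, i.e. there is an operator algebra $\B \subseteq B(H)$ and a completely bounded isomorphism $T : \A \to \B$ with completely bounded inverse. On $\B$ the multiplication $m_\B : \B \times \B \to \B$ is just the restriction of the multiplication on $B(H)$, which is a complete contraction on the Haagerup tensor product $B(H) \otimes_h B(H) \to B(H)$ by the Christensen--Effros--Sinclair / Haagerup factorization theorem (the Haagerup norm is the universal one for completely bounded bilinear maps, and operator multiplication realizes the identity $\|XY\|_{cb} \le \|X\|\,\|Y\|$ at all matrix levels). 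Hence $m_\B$ extends to a complete contraction $\B \otimes_h \B \to \B$. Transporting through $T$, the map $m_\A = T^{-1} \circ m_\B \circ (T \otimes_h T)$ is completely bounded on $\A \otimes_h \A$ because $\otimes_h$ is functorial for completely bounded maps (it is a tensor product in the operator space category), with norm controlled by $\|T\|_{cb}^2 \|T^{-1}\|_{cb}$. This direction is essentially formal once one knows the functoriality of $\otimes_h$ and the CES theorem.

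\medskip\noindent\textbf{The converse direction.} This is the substantial half and where I expect the main obstacle to lie. Assume $m : \A \otimes_h \A \to \A$ is completely bounded. The goal is to construct a completely isomorphic embedding of $\A$ into some $B(H)$ that is also an algebra homomorphism. The strategy is: (1) form the ``unitized'' multiplier-type representation --- embed $\A$ into $CB(\A)$ (or into the left regular representation on a suitable operator space) via $a \mapsto L_a$ where $L_a(b) = m(a \otimes b) = ab$; using complete boundedness of $m$ on $\otimes_h$ one checks that $a \mapsto L_a$ is completely bounded into $CB(\A, \A)$. (2) The key point is that $CB(\A,\A)$ sits completely isometrically inside $B(H)$ for an appropriate $H$ (e.g. via $\A \hookrightarrow B(K)$ and then $CB(B(K)_{\text{col}}, \A_{\text{col}})$ type constructions, or by appealing directly to Blecher--Ruan--Sinclair's abstract characterization of operator modules), so that $a \mapsto L_a$ becomes a completely bounded \emph{homomorphism} $\A \to B(H)$. (3) Finally one must upgrade this completely bounded homomorphism to a \emph{completely isomorphic} one: injectivity of $L$ follows from the existence of a bounded approximate identity argument or by adjoining a unit, and the crucial estimate $\|a\| \lesssim \|L_a\|_{cb}$ uses that $\|a\| = \|m(a \otimes e_\lambda)\|$ for an approximate identity, or passes to the unitization $\A^\sharp$ where $\|a\| = \|L_a(1)\| \le \|L_a\|_{cb}$. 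The honest obstacle here is handling the possible \emph{absence of a bounded approximate identity} in $\A$: the clean fix is to note that $m : \A \otimes_h \A \to \A$ being completely bounded passes to the unitization $\A^\sharp = \A \oplus \Comp 1$ (with a rescaled operator space structure à la Blecher's ``strong unitization''), on which the regular representation is automatically completely isometric-up-to-equivalence, and then restrict back to $\A$.

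\medskip\noindent\textbf{Remark on attribution.} In the paper this theorem is quoted verbatim from Blecher \cite{B95}, so in practice I would not reprove it from scratch; the above is the route Blecher's proof takes (and the reason the Haagerup tensor product is exactly the right one: it is the largest operator-space tensor norm dominated by the projective one for which the CES representation of completely bounded bilinear maps as $(x,y) \mapsto V^* \pi_1(x)\pi_2(y) W$ is available, and that representation is precisely what converts an abstract multiplication into a concrete operator-algebra multiplication). For the purposes of this paper we simply invoke the statement and move on to applying it to the deformed algebras $L^1_\mu(G)$ and $A_w(G)$.
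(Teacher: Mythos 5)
The paper contains no proof of this theorem: it is quoted verbatim from Blecher \cite{B95} and then applied, so your closing remark matches the paper's treatment and there is no in-paper argument to compare against. Your forward direction is correct (and easy): multiplication in $B(H)$ is completely contractive on the Haagerup tensor product simply because $M_n(B(H))=B(H^n)$ is an algebra, and functoriality of $\otimes_h$ under completely bounded maps transports this through $T$, giving the bound $\|T\|_{cb}^2\|T^{-1}\|_{cb}$ for $m$ on $\A\otimes_h\A$.

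The converse as you sketch it has a genuine gap at your step (2). It is not true in general that $CB(\A)$, with its canonical matrix norms $M_n(CB(\A))\cong CB(\A,M_n(\A))$, embeds completely isometrically (or even completely isomorphically, as an algebra) into some $B(H)$. A warning sign is that your argument never uses the Haagerup hypothesis at the decisive point: the left regular representation $a\mapsto L_a$ is completely contractive into $CB(\A)$ already because $m$ is completely contractive on $\A\prt\A$ (the standing assumption on any completely contractive Banach algebra), and the lower bound $\|a\|\le\|L_a\|_{cb}$ needs only a unit or a contractive approximate identity. So if step (2) held, every completely contractive Banach algebra with a contractive approximate identity would be completely representable as an operator algebra; this is contradicted inside this very paper, e.g. by Theorem \ref{thm-main-L1} applied with the central measure $\mu=\delta_e$, which shows that $L^1(G)$ for an infinite compact group $G$ (which has a contractive approximate identity) is \emph{not} completely representable, since $\delta_e\notin L^2(G)$. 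The object that genuinely is an operator algebra is Blecher's left multiplier algebra $\mathcal{M}_\ell(\A)$, whose norm is in general strictly larger than the $CB(\A)$-norm, and getting the regular representation to land in such an algebra is exactly where the h-complete boundedness of $m$ must enter: Blecher's actual route rescales $m$ to a complete contraction, passes to a suitably renormed unitization, and uses the Christensen--Effros--Sinclair representation of the completely contractive bilinear map (equivalently, the non-unital Blecher--Ruan--Sinclair theorem) to produce the concrete representation -- i.e. the mechanism you mention only in your final remark, not an embedding of $CB(\A)$. Your step (3) (unitization to get injectivity and the lower estimate) is the right idea, but one must also check that the extended multiplication on the unitization remains completely bounded on $\otimes_h$ and can be renormalized to be completely contractive with a unit of norm one; that is part of the proof rather than a formality.
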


\subsection{Relevant spaces in abstract harmonic analysis}

Let $G$ be a locally compact group and we denote the convolution algebra and the measure algebra of $G$ with $L^1(G)$ and $M(G)$, respectively. It is well-known that $L^1(G)$ is a 2-sided closed ideal in $M(G)$, so that for $\mu \in M(G)$ we have the following (left) multiplier
	$$M_\mu : L^1(G) \to L^1(G),\;f \mapsto \mu*f.$$
Moreover, it is also known that the multiplier algebra $M(L^1(G))$ of $L^1(G)$ can be identified with $M(G)$.

Let $P(G)$ be the set of all continuous positive definite functions on $G$ and let $B(G)$ be its linear span. The space $B(G)$ can be identified with the dual of the full group $C^*$-algebra $C^*(G)$, which is the completion of $L^1(G)$ under its largest $C^*$-norm. The space $B(G)$ with the pointwise multiplication and the dual norm is a commutative Banach algebra. The Fourier algebra $A(G)$ is the closure of $B(G)\cap C_c(G)$ in $B(G)$. It was shown in \cite{Em} that $A(G)$ is a commutative Banach algebra which is a 2-sided closed ideal in $B(G)$. Thus, any element $w\in B(G)$ gives rise to a multiplier on $A(G)$, but we actually have more multipliers. Recall that a function $w$ on $G$ is called a {\it multiplier} on $A(G)$ if $w\cdot A(G) \subseteq A(G)$. Then we have the following multiplier
	$$M_w: A(G) \to A(G),\; g\mapsto w\cdot g,$$
which is automatically bounded. We denote the collection of all multipliers on $A(G)$ by $MA(G)$. We define the space of all cb-multipliers $M_{cb}A(G)$ by
	$$M_{cb}A(G) := \{w\in MA(G) : ||M_w||_{cb}<\infty\}.$$
Both of the spaces are clearly commutative Banach algebras with respect to pointwise multiplication and we have the following inclusions
	$$A(G) \subseteq B(G) \subseteq M_{cb}A(G) \subseteq MA(G).$$

Let $G$ be a compact group. Then any element $\mu \in M(G)$ can be understood through its Fourier coefficients $(\widehat{\mu}(\pi))_{\pi \in \widehat{G}}$, where $\widehat{G}$ is the equivalence class of irreducible unitary representations on $G$ and 
	$$\widehat{\mu}(\pi) : = \int_G \pi(g^{-1})^t d\mu(g).$$
We use the notation
	$$\mu \sim \sum_{\pi\in \widehat{G}} d_\pi \text{Tr}(\widehat{\mu}(\pi) \pi^t),$$
which comes from the Fourier inversion formula saying that
	$$f(x) = \sum_{\pi\in \widehat{G}} d_\pi \text{Tr}(\widehat{f}(\pi) \pi^t(x))$$
for $f\in A(G) \cap L^1(G)$.

When $G$ is a discrete group we use a similar notation. Recall the translation operators $\lambda_g$, $g\in G$ on $\ell^2(G)$ given by $\lambda(g)(\delta_x) = \delta_{gx}$, $x\in G$, where $\delta_x$ is the point mass function on $x$. Now we consider the group von Neumann algebra $VN(G)$, the von Neumann algebra generated by $\{\lambda_g: g\in G\}$ in $B(\ell^2(G))$. Let  $\tau(\cdot)$ be the vacuum state on $VN(G)$ given by $\tau(\cdot) = \la \;\cdot \; \delta_e, \delta_e \ra$ and $L^2(VN(G))$ the associated $L^2$-space which is the completion of $VN(G)$ with respect to the inner product $\la \lambda_{g_1}, \lambda_{g_2} \ra := \tau(\lambda^*_{g_2}\lambda_{g_1})$. Note that we have $\ell^2(G) \cong L^2(VN(G))$ via the identification $\delta_g \mapsto \lambda_g$. Then clearly we have $VN(G) \subseteq L^2(VN(G))$ and any $T\in VN(G)$ is associated to a uniquely determined sequence $(\alpha_g)_{g\in G} \in \ell^2(G)$. In this case we write
	$$T \sim \sum_{g\in G}\alpha_g \lambda_g.$$
We will use the same notation for elements in $M_n(M(G))$, $M_n(VN(G))$ and $M_n(VN(G \times G))$.

\section{Construction of deformed convolution algebras and Fourier algebras}\label{sec-construction}

\begin{defn}
	Let $\mu\in M(G)$ and $w\in M_{cb}A(G)$ are norm 1 elements such that the corresponding multipliers $M_\mu$ and $M_w$ are injective with dense range. We define {\bf the deformed spaces} $L^1_\mu(G)$ and $A_w(G)$ as follows.
		$$L^1_\mu(G) := \mu*L^1(G),\;\; A_w(G):= w\cdot A(G)$$
	with the norms $||\cdot||_\mu$ and $||\cdot||_w$ given by
		$$||\mu*f||_\mu := ||f||_{L^1(G)} \;\; \text{and}\;\; ||w\cdot g||_w:= ||g||_{A(G)},\;\; f\in L^1(G),\; g\in A(G).$$
	In this case we call $\mu$ and $w$ by the {\bf deformation measure} for $L^1_\mu(G)$ and the {\bf deformation function} for $A_w(G)$, respectively. 
\end{defn}

\begin{rem}
	\begin{enumerate}
		\item
		The injectivity of the associated multipliers implies that the above norm formulas are well defined.
		\item In this case we have natural onto isometries:
			$$\Phi: L^1(G) \to L^1_\mu(G),\;\; f\mapsto \mu*f \;\;\text{and}\;\;  \Psi: A(G)\to A_w(G),\;\; g\mapsto w\cdot g.$$
		\item The natural operator space structures on $L^1_\mu(G)$ and $A_w(G)$ are the ones making $\Phi$ and $\Psi$ complete isometries, respectively.	
	\end{enumerate}
\end{rem}

\begin{prop}
The following maps are completely contractive.
	$$m_\mu: L^1_\mu(G) \prt L^1_\mu(G) \to L^1_\mu(G),\; f\otimes g \mapsto f*g$$
	and
	$$m_w: A_w(G) \prt A_w(G) \to A_w(G),\; f\otimes g \mapsto f\cdot g.$$
\end{prop}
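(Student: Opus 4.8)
The plan is to transport both multiplications back to the undeformed algebras along the canonical onto complete isometries $\Phi\colon L^1(G)\to L^1_\mu(G)$ and $\Psi\colon A(G)\to A_w(G)$ of the preceding remark, and then to display the resulting maps as compositions of complete contractions; functoriality of $\prt$ (that is, $\|u\otimes v\|_{cb}\le\|u\|_{cb}\|v\|_{cb}$) and of composition will then give the claim, using also that $\Phi,\Psi$ and their inverses are complete contractions.

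For $A_w(G)$: since pointwise multiplication is commutative and associative, $(w\cdot f)\cdot(w\cdot g)=w\cdot\bigl(w\cdot(f\cdot g)\bigr)=\Psi\bigl(M_w(f\cdot g)\bigr)$ for $f,g\in A(G)$, where $M_w$ is the multiplier operator and $f\cdot g\in A(G)$ because $A(G)$ is an algebra. Thus under $\Psi$ the map $m_w$ is identified with $M_w\circ m$, where $m\colon A(G)\prt A(G)\to A(G)$ is the multiplication of the completely contractive Banach algebra $A(G)$. Since $\|M_w\|_{cb}=\|w\|_{M_{cb}A(G)}=1$ by the normalization hypothesis, $m_w=\Psi\circ M_w\circ m\circ(\Psi^{-1}\prt\Psi^{-1})$ is a composition of complete contractions, so $\|m_w\|_{cb}\le1$.

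For $L^1_\mu(G)$: associativity of convolution together with the fact that $L^1(G)$ is a two-sided ideal of $M(G)$ gives, for $f,g\in L^1(G)$,
\[
(\mu*f)*(\mu*g)=\mu*(f*\mu*g)=\Phi(f*\mu*g),\qquad f*\mu*g\in L^1(G),
\]
so under $\Phi$ the map $m_\mu$ is identified with the twisted product $C\colon f\otimes g\mapsto f*\mu*g$ on $L^1(G)$. I would factor $C=m\circ(R_\mu\otimes\mathrm{id})$, where $m\colon L^1(G)\prt L^1(G)\to L^1(G)$ is convolution and $R_\mu(h)=h*\mu$. The key point is that $L^1(G)$, as the predual of the commutative von Neumann algebra $L^\infty(G)$, carries the maximal operator space structure, so every bounded operator on it is automatically completely bounded with the same norm; hence $m$ is a complete contraction (it is contractive since $L^1(G)$ is a Banach algebra) and $\|R_\mu\|_{cb}=\|R_\mu\|\le\|\mu\|_{M(G)}=1$. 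Functoriality of $\prt$ then yields $\|R_\mu\otimes\mathrm{id}\|_{cb}\le1$, whence $\|C\|_{cb}\le1$ and $\|m_\mu\|_{cb}=\|\Phi\circ C\circ(\Phi^{-1}\prt\Phi^{-1})\|_{cb}\le1$.

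The identities expressing $m_\mu$ and $m_w$ through $C$ and $M_w\circ m$ are routine; the content lies in the complete contractivity of the building blocks. For $A_w(G)$ this is free from the choice $w\in M_{cb}A(G)$ with $\|w\|=1$, leaving only the bookkeeping with $\Psi$. For $L^1_\mu(G)$ one must genuinely use that the operator space structure on $L^1(G)$ is maximal — so that $\|\mu\|_{M(G)}=1$ forces right convolution $R_\mu$ to be completely contractive — rather than merely that $L^1(G)$ is \emph{some} completely contractive Banach algebra; this is the step I would handle most carefully.
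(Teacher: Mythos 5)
Your proof is correct and follows essentially the same route as the paper: transport $m_\mu$ and $m_w$ along the complete isometries $\Phi$, $\Psi$ to the twisted products $f\otimes g\mapsto f*\mu*g$ and $f\otimes g\mapsto f\cdot w\cdot g$ on $L^1(G)$ and $A(G)$. The paper simply declares these transported maps ``clearly'' completely contractive, whereas you justify that step (factoring through $m\circ(R_\mu\otimes\mathrm{id})$ with the maximal operator space structure of $L^1(G)$, and through $M_w\circ m$ with $\|M_w\|_{cb}=\|w\|_{M_{cb}A(G)}=1$), which is a correct filling-in of the same argument.
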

\begin{proof}
By applying the complete isometry $\Phi$ we get
	$$\widetilde{m_\mu} = \Phi^{-1}\circ m_\mu \circ (\Phi\otimes \Phi): L^1(G)\prt L^1(G) \to L^1(G),\;\; f*\mu*g,$$
which is clearly a complete contraction, so that we know $m_\mu$ is also a complete contraction. The explanation for $m_w$ is the same.
\end{proof}

\begin{defn}
	The completely contractive Banach algebra $(L^1_\mu(G), m_\mu)$ is called {\it a deformed convolution algebra on $G$}. The completely contractive Banach algebra $(A_w(G), m_w)$ is called {\it a deformed Fourier algebra on $G$}.
\end{defn}

We record here the deformed mutiplication maps:
	$$\widetilde{m_\mu} = \Phi^{-1}\circ m_\mu \circ (\Phi\otimes \Phi): L^1(G)\prt L^1(G) \to L^1(G),\;\; f*\mu*g =: f*_\mu g,$$
	$$\widetilde{m_w} = \Psi^{-1}\circ m_w \circ (\Psi\otimes \Psi): A(G)\prt A(G) \to A(G),\;\; f\cdot w \cdot g =: f\cdot_w g.$$
\begin{rem}\label{rem-equiv-alg}
We can identify the algebras as completely contractive Banach algebras:
	$$(L^1_\mu(G), m_\mu) \cong (L^1(G), \widetilde{m_\mu}),\;\; (A_w(G), m_w) \cong (A(G), \widetilde{m_w}).$$
\end{rem}

We end this section by describing the duality for the spaces $L^1_\mu(G)$ and $A_w(G)$. We only consider $L^1_\mu(G)$ case since the other case is similar. Our understanding of the space $L^1_\mu(G)$ is based on the natural inclusion
	$$L^1_\mu(G) \subseteq L^1(G).$$
Recall that $L^1_\mu(G)$ is dense in $L^1(G)$, which is equivalent to the fact that the following restriction map is injective.
$$\iota: L^\infty(G) \cong (L^1(G))^* \hookrightarrow (L^1_\mu(G))^*,\;\; \varphi \mapsto \varphi|_{L^1_\mu(G)}.$$
This is why we require $M_\mu$ to have dense range, which is not so critical to the main results in this paper. Note that $L^\infty(G)$ (or more precisely $\iota(L^\infty(G))$) is clearly a $w^*$-dense subspace in $(L^1_\mu(G))^*$.
 
\begin{rem}
	\begin{enumerate}
		\item 
		Note that $L^1_\mu(G)$ is, in general, not closed in $L^1(G)$ with repect to the $L^1$-norm. 
		\item
		Recall the onto isometry $\Phi: L^1(G) \to L^1_\mu(G)$ gives us again an onto isometry
	$$\Phi^*: (L^1_\mu(G))^*\to L^\infty(G).$$ 
Now we can readily check that
	$$(\Phi^* \circ \iota) (g) = \check{\mu}*g,\;\; g\in L^\infty(G),$$
where $\check{\mu}$ is the measure given by
	$$\check{\mu}(E) = \mu(E^{-1}).$$	
	\end{enumerate}
\end{rem}

\section{Representability of the deformed convolution algebras on compact groups as operator algebras}\label{sec-compact-group}

In this section $G$ is always a compact group.

\subsection{The general case}

\begin{lem}
Let $\mu\in M(G)$ with norm 1. Then the associated multiplier $M_\mu$ is injective if and only if $\widehat{\mu}(\pi)$ is invertible for any $\pi\in \widehat{G}$. Moreover, in this case $M_\mu$ has dense range automatically.
\end{lem}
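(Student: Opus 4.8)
The plan is to pass to the Fourier side via the Peter--Weyl decomposition of $L^1(G)$ and reduce everything to elementary linear algebra on each block $M_{d_\pi}$. Three standing facts are needed at the outset: (i) the Fourier transform $f\mapsto(\widehat{f}(\pi))_{\pi\in\widehat{G}}$ is injective on $L^1(G)$; (ii) the trigonometric polynomials, i.e.\ finite sums of functions of the form $x\mapsto d_\pi\text{Tr}(A\pi^t(x))$ (which lie in $A(G)\cap L^1(G)$), are dense in $L^1(G)$, and such a function with ``coordinate matrix'' $A$ at $\pi$ has $\widehat{f}(\pi)=A$ and vanishing Fourier coefficients elsewhere; and (iii) a direct computation from the definition $\widehat{\mu}(\pi)=\int_G\pi(g^{-1})^t\,d\mu(g)$ yields the convolution identity $\widehat{\mu*f}(\pi)=\widehat{\mu}(\pi)\widehat{f}(\pi)$ for all $f\in L^1(G)$, $\pi\in\widehat{G}$. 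In other words, $M_\mu$ acts on the $\pi$-block of the Fourier side as left multiplication by the matrix $\widehat{\mu}(\pi)$, and the whole statement becomes a fibrewise assertion about invertibility of these matrices.

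For the ``if'' direction, I would assume each $\widehat{\mu}(\pi)$ is invertible and take $f\in L^1(G)$ with $\mu*f=0$. Then $\widehat{\mu}(\pi)\widehat{f}(\pi)=0$ for every $\pi$; multiplying on the left by $\widehat{\mu}(\pi)^{-1}$ forces $\widehat{f}(\pi)=0$ for all $\pi$, and injectivity of the Fourier transform gives $f=0$, so $M_\mu$ is injective. For the ``only if'' direction I would argue contrapositively: if $\widehat{\mu}(\pi_0)$ is not invertible, choose a nonzero $v\in\ker\widehat{\mu}(\pi_0)$ and any nonzero $w$, set $A=vw^*\neq 0$, so that $\widehat{\mu}(\pi_0)A=0$, and let $f(x)=d_{\pi_0}\text{Tr}(A\pi_0^t(x))$. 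Then $f\neq 0$ while $\widehat{\mu*f}(\pi)=\widehat{\mu}(\pi)\widehat{f}(\pi)=0$ for all $\pi$, hence $\mu*f=0$ and $M_\mu$ is not injective.

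Finally, assuming all $\widehat{\mu}(\pi)$ are invertible, I would show $M_\mu$ has dense range by showing its range contains every trigonometric polynomial. Given a trigonometric polynomial $p$ with $\widehat{p}(\pi)\neq 0$ only for $\pi$ in some finite set $F$, let $q$ be the trigonometric polynomial determined by $\widehat{q}(\pi)=\widehat{\mu}(\pi)^{-1}\widehat{p}(\pi)$ for $\pi\in F$ and $\widehat{q}(\pi)=0$ otherwise; being a finite sum of matrix coefficients, $q\in L^1(G)$. Then $\widehat{\mu*q}(\pi)=\widehat{\mu}(\pi)\widehat{q}(\pi)=\widehat{p}(\pi)$ for all $\pi$, so $\mu*q=p$ by injectivity of the Fourier transform. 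Since the trigonometric polynomials are dense in $L^1(G)$, the range of $M_\mu$ is dense.

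I do not expect a genuine obstacle here; the proof is essentially bookkeeping on Fourier blocks. The only points that require care are fixing the Fourier-transform and convolution conventions so that $M_\mu$ is indeed left multiplication by $\widehat{\mu}(\pi)$ on the $\pi$-block (this is where the transpose in the definition of $\widehat{\mu}(\pi)$ must be tracked), and verifying via the Peter--Weyl orthogonality relations that the coordinate function $x\mapsto d_{\pi_0}\text{Tr}(A\pi_0^t(x))$ has the asserted Fourier coefficients and is nonzero when $A\neq 0$.
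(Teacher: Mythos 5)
Your proof is correct and takes essentially the same approach as the paper: the paper's one-line argument simply observes that $M_\mu$ acts block-diagonally on the Peter--Weyl/Fourier side, as left multiplication by $\widehat{\mu}(\pi)$ on each finite-dimensional block, which is exactly the reduction you spell out (including the fibrewise construction of kernel elements and of preimages of trigonometric polynomials for dense range). Nothing further is needed.
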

\begin{proof}
Note that $L_\mu$ is also decomposed into a direct sum of operators acting on finite dimensional spaces. Then this is trivial.
\end{proof}

In this section we will provide a chracterization of representatibility of the deformed algebra $L^1_\mu(G)$ as an operator algebra. For that purpose we focus on the equivalent algebra $(L^1(G), \widetilde{m_\mu})$ with the dual perspective. Indeed, we can easily see that $L^1_\mu(G)$ is completely representable as an operator algebra if and only if the map
	$$(\widetilde{m_\mu})^*: L^\infty(G) \to L^\infty(G) \bar{\otimes} L^\infty(G)$$	
extends to a completely bounded map $(\widetilde{m_\mu})^*: L^\infty(G) \to L^\infty(G) \otimes_{eh} L^\infty(G)$. Here $\bar{\otimes}$ is the spatial tensor product of von Neumann algebras. We call the map $(\widetilde{m_\mu})^*$, {\it the deformed co-multiplication}. We first need to know how the deformed co-multiplication $(\widetilde{m_\mu})^*$ acts on concrete elements of $L^\infty(G)$.

\begin{prop}
For $f\in L^\infty(G)$ we have
	$$(\widetilde{m_\mu})^*(f)(x,y) = \int_G f(xzy)d\mu(z),\;\; x,y\in G.$$
In particular for $f=\pi_{ij}$ we have
	\begin{equation}\label{eq-deformed-comulti}(\widetilde{m_\mu})^*(\pi_{ij}) = \sum^{d_\pi}_{k=1}\pi_{ik} \otimes [\widehat{\mu}(\bar{\pi})\pi]_{kj},
	\end{equation}
where $\bar{\pi}$ is the conjugate representation of $\pi$ and $\overline{A}$ is the matrix complex conjugate given by $[\overline{A}]_{ij} = \overline{a_{ij}}$.	
\end{prop}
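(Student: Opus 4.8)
The plan is to compute the adjoint $(\widetilde{m_\mu})^*$ directly from the defining duality pairing. By Remark \ref{rem-equiv-alg} we work with the algebra $(L^1(G),\widetilde{m_\mu})$, where $\widetilde{m_\mu}(f\otimes g)=f*\mu*g$, and we use the identification $(L^1(G)\prt L^1(G))^*=L^\infty(G)\bar{\otimes}L^\infty(G)\cong L^\infty(G\times G)$. Thus for $\phi\in L^\infty(G)$ and $f,g\in L^1(G)$,
	$$\langle (\widetilde{m_\mu})^*(\phi), f\otimes g\rangle=\int_G \phi(t)\,(f*\mu*g)(t)\,dt.$$
First I would expand the right-hand side by writing the two convolutions and the integration against $\mu$ as a single integral over $G\times G\times G$; this is legitimate by Fubini, since $\|f*\mu*g\|_1\le\|f\|_1\|\mu\|\|g\|_1<\infty$. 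A change of variables — using that $G$ is compact, hence unimodular, so that left and right translations preserve Haar measure — rewrites this as
	$$\int_G\int_G\Big(\int_G \phi(xzy)\,d\mu(z)\Big)f(x)g(y)\,dx\,dy,$$
and the first displayed formula follows because simple tensors $f\otimes g$ are norm-dense in $L^1(G)\prt L^1(G)$.

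For the second formula I would put $\phi=\pi_{ij}$ and use the homomorphism property of $\pi$: $\pi_{ij}(xzy)=[\pi(x)\pi(z)\pi(y)]_{ij}=\sum_{k,l=1}^{d_\pi}\pi_{ik}(x)\pi_{kl}(z)\pi_{lj}(y)$, so that
	$$(\widetilde{m_\mu})^*(\pi_{ij})(x,y)=\sum_{k,l}\pi_{ik}(x)\Big(\int_G \pi_{kl}(z)\,d\mu(z)\Big)\pi_{lj}(y).$$
It then remains to identify the scalar $\int_G \pi_{kl}(z)\,d\mu(z)$ with $[\widehat{\mu}(\bar\pi)]_{kl}$. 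This is where unitarity enters: $\pi_{kl}(z)=\overline{\pi_{lk}(z^{-1})}=\bar{\pi}_{lk}(z^{-1})$, and comparing with the definition $\widehat{\mu}(\sigma)=\int_G \sigma(g^{-1})^t\,d\mu(g)$ (so $[\widehat{\mu}(\sigma)]_{ab}=\int_G \sigma_{ba}(g^{-1})\,d\mu(g)$) gives $[\widehat{\mu}(\bar\pi)]_{kl}=\int_G \bar{\pi}_{lk}(z^{-1})\,d\mu(z)$. Substituting back and reassembling the double sum as $\sum_{k=1}^{d_\pi}\pi_{ik}\otimes[\widehat{\mu}(\bar\pi)\pi]_{kj}$ finishes the proof.

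The only real obstacle is bookkeeping: one must be consistent about the convolution convention for $f*\mu*g$, about the placement of the transpose and the complex conjugate in the definition of $\widehat{\mu}$, and about the direction of the substitution in the change of variables — a slip in any of these will misplace $\pi$ versus $\bar\pi$ or reverse the order of the factors in $\widehat{\mu}(\bar\pi)\pi$. There is no analytic subtlety: compactness of $G$ makes every integral finite and every interchange of order of integration routine, so I would fix all conventions at the outset and simply track indices carefully.
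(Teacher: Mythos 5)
Your proposal is correct and follows essentially the same route as the paper: compute $\langle (\widetilde{m_\mu})^*(\phi), f\otimes g\rangle = \langle \phi, f*\mu*g\rangle$, expand and change variables to obtain $\int_G \phi(xzy)\,d\mu(z)$, and then for $\phi=\pi_{ij}$ use the homomorphism property to reduce to $[\pi(x)(\int_G\pi(z)\,d\mu(z))\pi(y)]_{ij}$, identifying $\int_G\pi(z)\,d\mu(z)=\widehat{\mu}(\bar\pi)$ via unitarity and the paper's convention $\widehat{\mu}(\sigma)=\int_G\sigma(g^{-1})^t\,d\mu(g)$. Your index bookkeeping (in particular the step $[\widehat{\mu}(\bar\pi)]_{kl}=\int_G\bar\pi_{lk}(z^{-1})\,d\mu(z)=\int_G\pi_{kl}(z)\,d\mu(z)$) is accurate, and in fact spells out the final identification that the paper leaves implicit.
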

\begin{proof}
For $g,h \in L^1(G)$ we have
	\begin{align*}
	\la (\widetilde{m_\mu})^*(f), g\otimes h \ra
	&= \la f, g*\mu*h \ra\\
	& = \int_G\int_G \int_G g(x)h(z^{-1}x^{-1}y)f(y)d\mu(z)dxdy\\
	& = \int_G\int_G \int_G g(x)h(y)f(xzy)d\mu(z)dxdy\\
	& = \int_G\int_G \left(\int_G f(xzy)d\mu(z) \right)g(x)h(y)dxdy.
	\end{align*}

In particular, for $f = \pi_{ij}$ we have
	\begin{align*}
	(\widetilde{m_\mu})^*(\pi_{ij}) & = \int_G \pi_{ij}(xzy)d\mu(z)\\
	& = \left[\int_G \pi(xzy)d\mu(z)\right]_{ij}\\
	& = \left[\pi(x)\left(\int_G \pi(z)d\mu(z)\right)\pi(y)\right]_{ij},
	\end{align*}
which gives us the desired conclusion.
\end{proof}

We also need the following theorem by S. Helgason, which is a compact group generalization of a Littlewood's theorem.
\begin{thm}\label{thm-Helgason}({\bf Helgason \cite{Hel}})\\
Let $A = (A^\pi)_{\pi\in \widehat{G}}$ be any family of matrices with $A^\pi \in M_{d_\pi}$. Suppose that the formal series
	$$\sum_{\pi\in \widehat{G}} d_\pi \text{Tr}(A^\pi U^\pi \pi^t)$$
belong to $M(G)$ for any choice of unitary $U = (U^\pi)_{\pi\in \widehat{G}}$, $U^\pi \in \mathcal{U}(d_\pi)$.	Then we have
	$$\sum_{\pi\in \widehat{G}} d_\pi \text{Tr}((A^\pi)^*A^\pi)<\infty.$$
\end{thm}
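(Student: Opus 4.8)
The plan is to adapt the classical proof of Littlewood-type theorems, in three stages: a uniform boundedness argument (Baire category), a regularization by an approximate identity, and a Khintchine-type averaging inequality---the only twist being that one averages over Haar-random unitaries $U^\pi$ instead of over random signs. Write $\norm{A}_{HS}^2 := \text{Tr}(A^*A)$, so that the goal is $\sum_\pi d_\pi \norm{A^\pi}_{HS}^2 < \infty$. Let $\Om := \prod_{\pi \in \widehat G} \U(d_\pi)$, a compact group in the product topology, equipped with its normalized Haar measure $m$, which is the product of the Haar measures of the factors; in particular the coordinate maps $U \mapsto U^\pi$ are independent. By hypothesis, for every $U = (U^\pi)_\pi \in \Om$ there is a measure $\mu_U \in M(G)$ with $\widehat{\mu_U}(\pi) = A^\pi U^\pi$ for all $\pi$, and it is unique since Fourier coefficients separate the elements of $M(G)$.

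The first stage is the uniform bound $M := \sup_{U \in \Om} \norm{\mu_U}_{M(G)} < \infty$. Each $F_n := \{ U \in \Om : \norm{\mu_U}_{M(G)} \le n \}$ is closed: if $U_j \to U$ in $\Om$ with $\norm{\mu_{U_j}}_{M(G)} \le n$, pass to a subnet with $\mu_{U_j} \to \nu$ in the weak$^*$ topology of $M(G) = C(G)^*$, so $\norm{\nu} \le n$; every matrix entry $\mu \mapsto \widehat{\mu}(\pi)_{ij}$ is weak$^*$-continuous while $\widehat{\mu_{U_j}}(\pi) = A^\pi U_j^\pi \to A^\pi U^\pi$, so $\widehat{\nu}(\pi) = A^\pi U^\pi = \widehat{\mu_U}(\pi)$, whence $\nu = \mu_U$ and $U \in F_n$. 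Since $\Om$ is compact Hausdorff, hence Baire, and $\Om = \bigcup_n F_n$, some $F_{n_0}$ has nonempty interior, which contains a basic open set $\{ U : U^\pi \in V_\pi \text{ for } \pi \in S \}$ with $S \subseteq \widehat G$ finite. Given an arbitrary $\tilde U \in \Om$, fix $U_0$ in that basic open set and define $U$ to agree with $U_0$ on $S$ and with $\tilde U$ off $S$; then $U \in F_{n_0}$, while $\mu_{\tilde U} - \mu_U$ has Fourier transform supported on the finite set $S$, so it is a trigonometric polynomial, and by the Plancherel identity $\norm{\mu_{\tilde U} - \mu_U}_{M(G)} = \norm{\mu_{\tilde U} - \mu_U}_{L^1(G)} \le \norm{\mu_{\tilde U} - \mu_U}_{L^2(G)} \le 2 \big( \sum_{\pi \in S} d_\pi \norm{A^\pi}_{HS}^2 \big)^{1/2} =: C_S < \infty$. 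Hence $\norm{\mu_{\tilde U}}_{M(G)} \le n_0 + C_S$ for every $\tilde U$.

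The second stage is regularization and averaging. Fix a central bounded approximate identity $(e_N)$ for $L^1(G)$ consisting of trigonometric polynomials, with $\sup_N \norm{e_N}_{L^1(G)} =: C_0 < \infty$; such a net exists for any compact $G$ (conjugation-average a standard positive approximate identity to make it central, then approximate it in $L^1$-norm by central trigonometric polynomials, i.e. finite linear combinations of characters). Centrality forces $\widehat{e_N}(\pi) = c_N(\pi) I_{d_\pi}$ by Schur's lemma, and the approximate-identity property forces $c_N(\pi) \to 1$ for each $\pi$. Then $g_N(U, \cdot) := \mu_U * e_N$ is a trigonometric polynomial with $g_N(U,x) = \sum_\pi d_\pi c_N(\pi) \text{Tr}(A^\pi U^\pi \pi^t(x))$ and $\norm{g_N(U, \cdot)}_{L^1(G)} \le \norm{\mu_U}_{M(G)} \norm{e_N}_{L^1(G)} \le M C_0$. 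The key observation is that for each fixed $x$ the $m$-distribution of $U \mapsto g_N(U,x)$ does not depend on $x$, because $U^\pi \mapsto U^\pi \pi^t(x)$ preserves Haar measure on $\U(d_\pi)$; so, writing $Z_N := g_N(\cdot, e) = \sum_\pi d_\pi c_N(\pi) \text{Tr}(A^\pi U^\pi)$, Fubini's theorem gives $\E_m |Z_N| = \int_\Om \norm{g_N(U, \cdot)}_{L^1(G)} \, dm(U) \le M C_0$, while $\E_m |Z_N|^2 = \sum_\pi d_\pi |c_N(\pi)|^2 \norm{A^\pi}_{HS}^2 =: B_N$ (the cross terms vanish because the $U^\pi$ are independent with $\E_m[\text{Tr}(A^\pi U^\pi)] = 0$, and $\E_m |\text{Tr}(A^\pi U^\pi)|^2 = \tfrac{1}{d_\pi} \norm{A^\pi}_{HS}^2$).

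It remains to establish a Khintchine-type lower bound $\E_m|Z_N| \ge c\, B_N^{1/2}$ with $c > 0$ absolute, and then conclude. I would obtain it from an $L^2$--$L^4$ moment comparison. The summands $Y_\pi := d_\pi c_N(\pi) \text{Tr}(A^\pi U^\pi)$ are independent, mean zero, and satisfy $\E_m[Y_\pi^2] = 0$ (by the circle action $U^\pi \mapsto e^{i\theta} U^\pi$), so expanding $\E_m|Z_N|^4$ only the diagonal terms and the two "pair-with-conjugate" pairings survive, giving $\E_m|Z_N|^4 \le 2 B_N^2 + \sum_\pi \E_m|Y_\pi|^4$. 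A direct computation with the Weingarten calculus on $\U(d)$ (reducing $\text{Tr}(AU)$ to $\sum_i \sigma_i(A) U_{ii}$ by bi-invariance) yields, for any $A \in M_d$, $\E |\text{Tr}(AU)|^4 = \tfrac{2}{d^2-1} \big( \norm{A}_{HS}^4 - \tfrac1d \sum_i \sigma_i(A)^4 \big) \le \tfrac{2d^2}{d^2-1} \big( \E|\text{Tr}(AU)|^2 \big)^2 \le \tfrac83 \big( \E|\text{Tr}(AU)|^2 \big)^2$ for $d \ge 2$, and trivially with constant $1$ when $d = 1$; crucially the constant does not depend on $d$, so $\sum_\pi \E_m|Y_\pi|^4 \le \tfrac83 \sum_\pi (\E_m|Y_\pi|^2)^2 \le \tfrac83 B_N^2$ and hence $\E_m|Z_N|^4 \le 5 B_N^2$. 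Hölder's inequality $\E_m|Z_N|^2 \le (\E_m|Z_N|)^{2/3} (\E_m|Z_N|^4)^{1/3}$ then gives $\E_m|Z_N| \ge 5^{-1/2} B_N^{1/2}$, hence $B_N \le 5 M^2 C_0^2$ for all $N$; since $c_N(\pi) \to 1$, for every finite $S \subseteq \widehat G$ we get $\sum_{\pi \in S} d_\pi \norm{A^\pi}_{HS}^2 = \lim_N \sum_{\pi \in S} d_\pi |c_N(\pi)|^2 \norm{A^\pi}_{HS}^2 \le 5 M^2 C_0^2$, and taking the supremum over $S$ proves the theorem. The step I expect to be the genuine obstacle is exactly this dimension-uniform fourth-moment bound: the crude estimate $\E|\text{Tr}(AU)|^4 \le \norm{A}_{S_1}^2 \, \E|\text{Tr}(AU)|^2 \le d^2 (\E|\text{Tr}(AU)|^2)^2$ (with $\norm{A}_{S_1} = \sum_i \sigma_i(A)$ the trace norm) carries a factor $d^2$ and is useless when summed over $\widehat G$, so one really needs the second-order Weingarten term to exhibit the cancellation $\norm{A}_{HS}^4 - \tfrac1d \sum_i \sigma_i(A)^4$ that keeps the comparison constant bounded; a secondary technical point is the existence, for a general non-Lie compact group, of a central bounded approximate identity consisting of trigonometric polynomials, which is what makes $\mu_U * e_N$ an honest trigonometric polynomial with scalar-modulated Fourier coefficients, legitimizing the Plancherel and averaging computations above.
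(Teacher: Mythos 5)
Your proof is correct, but note that the paper does not actually prove this statement: it is quoted verbatim from Helgason's 1957 paper \cite{Hel} and used as a black box, so what you have produced is a self-contained proof of a cited result rather than an alternative to an argument in the text. Your three-stage structure is the natural noncommutative version of the classical Littlewood argument, and I checked the points where it could have gone wrong. The Baire-category step is sound (the sets $F_n$ are weak$^*$-closed because Fourier coefficients are weak$^*$-continuous and separate measures, and the finite-coordinate perturbation is controlled by Parseval since a measure with finitely supported Fourier transform is a trigonometric polynomial). The regularization by a central trigonometric-polynomial bounded approximate identity is legitimate for any compact group (conjugation-invariant neighbourhoods exist by compactness, and characters span the continuous class functions by Peter--Weyl), and centrality is what keeps the Fourier coefficients of $\mu_U * e_N$ of the form $c_N(\pi)A^\pi U^\pi$. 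The distributional invariance $U^\pi \mapsto U^\pi\pi^t(x)$ and the Fubini step are fine. Finally, I verified the Weingarten computation: for Haar $U\in\mathcal{U}(d)$, $d\ge 2$, one indeed gets $\mathbb{E}\,|\mathrm{Tr}(AU)|^4=\tfrac{2}{d^2-1}\bigl(\|A\|_{HS}^4-\tfrac1d\sum_i\sigma_i(A)^4\bigr)\le\tfrac{8}{3}\bigl(\mathbb{E}\,|\mathrm{Tr}(AU)|^2\bigr)^2$, with the $d=1$ case trivial, so the $L^1$--$L^2$--$L^4$ comparison gives a dimension-free Khintchine constant and the uniform bound $B_N\le 5M^2C_0^2$ follows; you are right that this dimension-uniformity is the crux. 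One remark: the dimension-free lower bound $\mathbb{E}\,|Z_N|\ge c\,B_N^{1/2}$ can also be obtained without any Weingarten calculus by invoking cotype $2$ of $M(G)=C(G)^*$ together with the fact that the variables $\mathrm{Tr}(A^\pi U^\pi)$ dominate (in distribution, after conditioning) independent signs; this is essentially the route the authors themselves sketch in Remark \ref{rem-main-L1}(3) as a substitute for Theorem \ref{thm-Helgason} in the Lie-group case, and it shortens your third stage at the cost of importing the cotype theorem for duals of $C^*$-algebras.
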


Here comes the main results of this section.

\begin{thm}\label{thm-main-L1}
The deformed algebra $L^1_\mu(G)$ is completely representable as an operator algebra if $\mu\in L^2(G)$. The converse is true when $\mu$ is central, i.e. $\widehat{\mu}(\pi) = c_\pi I_\pi$ for some $c_\pi$ for all $\pi\in \widehat{G}$. Moreover, the same holds in the category of Banach spaces.
\end{thm}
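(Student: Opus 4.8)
The plan is to reduce the problem to a concrete computation with the deformed co-multiplication and then apply Blecher's theorem together with Helgason's theorem. By Remark~\ref{rem-equiv-alg} and the discussion preceding the statement, $L^1_\mu(G)$ is completely representable as an operator algebra if and only if $(\widetilde{m_\mu})^*$ maps $L^\infty(G)$ into $L^\infty(G)\otimes_{eh}L^\infty(G)$ (rather than merely into the spatial tensor product), and completely boundedly so; the Banach space version is the analogous statement with the projective tensor product replaced by the $\gamma$-tensor product and $\otimes_{eh}$ replaced by the Varopoulos-type tensor product $V(G) = C(G)\otimes_h C(G)$ (or rather the appropriate bidual analogue), so the two parts will run in parallel. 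The key identity is the formula \eqref{eq-deformed-comulti}:
$$(\widetilde{m_\mu})^*(\pi_{ij}) = \sum_{k=1}^{d_\pi}\pi_{ik}\otimes[\widehat{\mu}(\bar\pi)\pi]_{kj},$$
which exhibits $(\widetilde{m_\mu})^*(\pi_{ij})$ as an explicit element of a tensor product whose ``size'' is governed by $\widehat{\mu}(\bar\pi)$.

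For the sufficiency direction, suppose $\mu\in L^2(G)$, equivalently $\sum_\pi d_\pi\|\widehat{\mu}(\pi)\|_{HS}^2<\infty$. I would show directly that $(\widetilde{m_\mu})^*$ factors through $\otimes_{eh}$ with controlled norm. Writing \eqref{eq-deformed-comulti} in matrix form, for a fixed $\pi$ the matrix $\big((\widetilde{m_\mu})^*(\pi_{ij})\big)_{ij} \in M_{d_\pi}(L^\infty(G)\otimes_{eh}L^\infty(G))$ factors as $A^\pi\odot B^\pi$ where $A^\pi = (\pi_{ik})_{ik}$ is a row-type object (a $d_\pi\times d_\pi$ unitary-valued matrix, which has small completely bounded norm as a multiplier because $\pi$ is a unitary representation — its $M_{d_\pi}(L^\infty(G))$ norm is controlled) and $B^\pi = [\widehat{\mu}(\bar\pi)\pi]_{kj}$ whose norm involves $\|\widehat{\mu}(\bar\pi)\|$. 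Assembling over all $\pi$ and using the characterization of $\otimes_{eh}$ norms via Haagerup factorizations $X = A\odot B$ with the index set $I = \bigsqcup_\pi \{1,\dots,d_\pi\}$, the total factorization has $\|A\|\cdot\|B\|$ bounded by something like $\big(\sup_\pi\|\text{row of }\pi\|\big)\cdot\big(\sum_\pi d_\pi\|\widehat{\mu}(\pi)\|_{HS}^2\big)^{1/2}$, which is finite precisely when $\mu\in L^2(G)$; one checks that an element of $L^\infty(G)$ is a weak-$*$ limit of finite linear combinations of the $\pi_{ij}$ and that the factorization passes to the limit by weak-$*$ compactness of balls in $M_{n,I}$. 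This yields the desired completely bounded extension.

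For the necessity direction under the centrality hypothesis $\widehat{\mu}(\pi) = c_\pi I_\pi$, I would argue contrapositively: if $\mu\notin L^2(G)$, i.e. $\sum_\pi d_\pi|c_\pi|^2 d_\pi = \sum_\pi d_\pi^2|c_\pi|^2 = \infty$, then $(\widetilde{m_\mu})^*$ cannot extend boundedly to $\otimes_{eh}$. The route is to test against the identity (or rather against the family of coordinate functions) and invoke Helgason's Theorem~\ref{thm-Helgason}. If $(\widetilde{m_\mu})^*$ were completely bounded into $L^\infty(G)\otimes_{eh}L^\infty(G) \hookrightarrow CB^\sigma(B(L^2(G)), B(L^2(G)))$, then composing with an arbitrary choice of unitaries $U^\pi$ acting on the second leg (via the embedding into $w^*$-continuous cb maps, which lets us substitute $\pi\mapsto U^\pi\pi$ on one side) would produce a family of elements of $M(G)$ — coming from $\sum_\pi d_\pi\text{Tr}(\widehat{\mu}(\bar\pi)U^\pi\pi^t)$-type series — uniformly in $U$; Helgason's theorem then forces $\sum_\pi d_\pi\text{Tr}(\widehat{\mu}(\pi)^*\widehat{\mu}(\pi)) = \sum_\pi d_\pi^2|c_\pi|^2<\infty$, i.e. $\mu\in L^2(G)$, a contradiction. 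The centrality is what makes the $U^\pi$-twisting interact cleanly with the scalar matrices $c_\pi I_\pi$ so that Helgason's hypothesis is genuinely verified. The Banach-space version of necessity is actually easier since $V(G)$-membership already forces (by the classical Varopoulos/Littlewood–Helgason circle of ideas, or directly from Helgason) the square-summability.

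The main obstacle I anticipate is the necessity direction: extracting the uniform family of measures to feed into Helgason's theorem. One must carefully use the concrete embedding $L^\infty(G)\otimes_{eh}L^\infty(G)\hookrightarrow CB^\sigma(B(H),B(H))$, identify $L^\infty(G)$ acting on $L^2(G)$ so that the representation coefficients $\pi_{ij}$ are matrix units of the regular representation decomposition, and verify that applying $T_{A,B}$ to the specific operators implementing a choice of unitaries $U^\pi$ on each isotypic block yields precisely an element of $M(G)\cong C(G)^*$ rather than merely a bounded operator — this is where one pays attention to the distinction between $\bar\otimes$ and $\otimes_{eh}$. The sufficiency direction, by contrast, is essentially a bookkeeping exercise in organizing the block-diagonal Haagerup factorization, though some care is needed to confirm that the coordinate functions are norming and that the factorization is stable under the relevant limits.
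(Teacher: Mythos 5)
Your necessity argument in the central case is essentially the paper's own: implement an arbitrary family of unitaries $U=(U^\pi)_{\pi\in\widehat G}$ by the isometry $T_U(\pi_{ij})=[\pi^*U^\pi]_{ij}$ on $L^2(G)$, view $(\widetilde{m_\mu})^*(f)\in L^\infty(G)\otimes_{eh}L^\infty(G)\subseteq CB^\sigma(B(L^2(G)),B(L^2(G)))$, apply it to $T_U$, evaluate at $1_G$, and take the supremum over $f$ in the unit ball of $\text{Pol}(G)\subseteq C(G)$ to see that the series $\sum_{\pi}c_{\bar\pi}d_\pi\text{Tr}([U^\pi]^t\pi^t)$ defines an element of $M(G)$ for every choice of $U$; Helgason's theorem then yields $\sum_\pi d_\pi^2|c_\pi|^2<\infty$, i.e. $\mu\in L^2(G)$. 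So that half of your plan, including the role of centrality, is on target (the Banach-space statement is then obtained in the paper by combining the same computation with \cite[Theorem 2.8]{LSS}, rather than by a Varopoulos-algebra argument, but that is a minor difference).

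The genuine gap is in the sufficiency direction. The per-representation identity $\bigl((\widetilde{m_\mu})^*(\pi_{ij})\bigr)_{ij}=(\pi_{ik})\odot\bigl([\widehat\mu(\bar\pi)\pi]_{kj}\bigr)$ is correct, but your assembly step does not yield the needed bound $\|(\widetilde{m_\mu})^*(F)\|_{eh}\le\|\mu\|_{L^2(G)}\|F\|_{M_n(L^\infty(G))}$ for general $F$. First, when rows are concatenated over the index set $\bigsqcup_\pi\{1,\dots,d_\pi\}$ the Haagerup norm of the concatenation is $\|\sum_\pi A^\pi(A^\pi)^*\|^{1/2}$, not $\sup_\pi\|A^\pi\|$: with row entries $\pi_{ik}$ and no coefficients this diverges like $(\sum_\pi d_\pi)^{1/2}$, while if you instead absorb the Fourier coefficients $\widehat F(\pi)$ into the column you are led to require $\sum_\pi d_\pi^2\|\widehat\mu(\pi)\|_{S^2_{d_\pi}}^2<\infty$, strictly stronger than $\mu\in L^2(G)$. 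Second, the weak-$*$ limit step is circular: to pass factorizations of trigonometric polynomials to a general $F$ you need factorization norms that are uniform over the approximants, i.e. exactly the inequality $\|(\widetilde{m_\mu})^*(P)\|_{eh}\le C\|P\|_{\infty}$ for all polynomials $P$ that is to be proved; uniform per-basis-element bounds do not give it. The paper closes this by factorizing the image of $F$ itself, with row entries $\sum_i\sqrt{d_\pi}\,\widehat F(\pi)_{ij}\otimes\pi_{ik}$ and column entries $\sqrt{d_\pi}\,I_n\otimes[\widehat\mu(\bar\pi)\pi]_{kj}$: pointwise unitarity of $\pi$ collapses the row norm to $\|\sum_\pi d_\pi\widehat F(\pi)\widehat F(\pi)^*\|^{1/2}=\|\int_G FF^*\|^{1/2}\le\|F\|$, and unitarity again makes the column norm exactly $\|\mu\|_{L^2(G)}$. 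This distribution of $\widehat F(\pi)$ and of the factor $d_\pi=\sqrt{d_\pi}\sqrt{d_\pi}$ between the two legs is the actual content of the positive direction and is missing from your sketch.
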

\begin{proof}
First we suppose that $\mu\in L^2(G)$. If we pick $F = [f_{kl}]_{kl}\in M_n(L^\infty(G))$, then we have
	$$F \sim \sum_{\pi\in \widehat{G}} d_\pi \sum^{d_\pi}_{i,j=1}\widehat{F}(\pi)_{ij} \otimes \pi_{ij},$$
where $\widehat{F}(\pi)_{ij} = \left[(\widehat{f_{kl}})_{ij}\right]_{kl} \in M_n$. Then by \eqref{eq-deformed-comulti} we have
	\begin{align*}
	(I_n\otimes (\widetilde{m_\mu})^*)(F) & \sim \sum_{\pi\in \widehat{G}} d_\pi \sum^{d_\pi}_{i,j,k=1}\widehat{F}(\pi)_{ij} \otimes \pi_{ik} \otimes [\widehat{\mu}(\bar{\pi})\pi]_{kj}\\
	& = \sum_{\pi\in \widehat{G}}\sum^{d_\pi}_{j,k=1} \left(\sum^{d_\pi}_{i=1}\sqrt{d_\pi} \widehat{F}(\pi)_{ij} \otimes \pi_{ik}\right) \odot \left(\sqrt{d_\pi} I_n \otimes [\widehat{\mu}(\bar{\pi})\pi]_{kj}\right)\\
	& = A\odot B,
	\end{align*}
where $A$ and $B$ are row and column matrices, respectively, given as follows. For the index set $J = \{(\pi, j,k) : \pi \in \widehat{G}, 1\le j,k\le d_\pi\}$, the matrix $A$ is the $M_n(L^\infty(G))$-valued row matrix in $M_{J, J}$ whose $(\pi, j,k)$-th entry is $\sum^{d_\pi}_{i=1}\sqrt{d_\pi} \widehat{F}(\pi)_{ij} \otimes \pi_{ik}\in M_n(L^\infty(G))$. Similarly, the matrix $B$ is the $M_n(L^\infty(G))$-valued column matrix in $M_{J, J}$ whose $(\pi, j,k)$-th entry is $\sqrt{d_\pi} I_n \otimes [\widehat{\mu}(\bar{\pi})\pi]_{kj}\in M_n(L^\infty(G))$. Moreover, we have
	\begin{align*}
	||A||^2_{M_{J,J}(M_n(L^\infty(G)))}
	& = ||\sum_{\pi\in \widehat{G}}\sum^{d_\pi}_{j,k=1} \left(\sum^{d_\pi}_{i,i'=1}d_\pi \widehat{F}(\pi)_{ij} \widehat{F}(\pi)_{i'j}^*\otimes \pi_{ik}\overline{\pi_{i'k}}\right)||_{M_n(L^\infty(G))}\\
	& = ||\sum_{\pi\in \widehat{G}}\sum^{d_\pi}_{j,i,i'=1} d_\pi \widehat{F}(\pi)_{ij} \widehat{F}(\pi)_{i'j}^*\delta_{i,i'}\otimes 1_G||_{M_n(L^\infty(G))}\\
	& = ||\sum_{\pi\in \widehat{G}}\sum^{d_\pi}_{i,j=1} d_\pi \widehat{F}(\pi)_{ij} \widehat{F}(\pi)_{ij}^*||_{M_n}\\
	& = ||\int_G F(x)F^*(x) dx||_{M_n} \le ||F||^2_{M_n(L^\infty(G))}.
	\end{align*}

For $B$ we have
	\begin{align*}
	||B||^2_{M_{J,J}(M_n(L^\infty(G)))}
	& = ||\sum_{\pi\in \widehat{G}}\sum^{d_\pi}_{j,k=1} d_\pi (I_n)^*I_n \otimes \overline{[\widehat{\mu}(\bar{\pi})\pi]_{kj}}[\widehat{\mu}(\bar{\pi})\pi]_{kj}||_{M_n(L^\infty(G))}\\
	& = ||I_n \otimes \sum_{\pi\in \widehat{G}}d_\pi \sum^{d_\pi}_{j,k=1} |[\widehat{\mu}(\bar{\pi})]_{kj}|^2\cdot 1_G||_{M_n(L^\infty(G))}\\
	& = \sum_{\pi\in \widehat{G}}d_\pi ||\widehat{\mu}(\bar{\pi})||^2_{S^2_{d_\pi}} = ||\mu||^2_{L^2(G)}.
	\end{align*}
Note that we use unitarity of $\pi$ for the second equality. Now combining the above two we get
	$$||(I_n\otimes (\widetilde{m_\mu})^*)(F)||_{eh} \le ||F||_{M_n(L^\infty(G))}\cdot ||\mu||_{L^2(G)},$$
which implies that $(\widetilde{m_\mu})^*$ is completely bounded with cb-norm $\le ||\mu||_{L^2(G)}$.

For the converse direction we assume that $L^1_\mu(G)$ is completely representable as an operator algebra and $\mu$ is central with $\widehat{\mu}(\pi) = c_\pi I_\pi$, $\pi\in \widehat{G}$. If we take any $f\in L^\infty(G)$, then we have
	$$f \sim \sum_{\pi\in \widehat{G}} d_\pi \sum^{d_\pi}_{i,j=1}\widehat{f}(\pi)_{ij} \pi_{ij}.$$
By \eqref{eq-deformed-comulti} we also have
	$$(\widetilde{m_\mu})^*(f) \sim \sum_{\pi\in \widehat{G}} c_{\bar{\pi}} d_\pi \sum^{d_\pi}_{i,j,k=1}\widehat{f}(\pi)_{ij}\pi_{ik} \otimes \pi_{kj}.$$
Now we will use the embedding $L^\infty(G) \otimes_{eh} L^\infty(G) \hookrightarrow CB^\sigma(B(L^2(G))).$	For any choice of unitary $U = (U^\pi)_{\pi\in \widehat{G}}$, $U^\pi \in \mathcal{U}(d_\pi)$, we define $T_U \in B(L^2(G))$ by
	$$T_U(\pi_{ij}) := [\pi^*U^\pi]_{ij}.$$
We can actually check $T_U$ is an isometry. Indeed, for any $\pi \in \widehat{G}$ we have
	\begin{align*}
		\la T_U(\pi_{ij}), T_U(\sigma_{kl}) \ra
		& = \sum^{d_\pi}_{r,s=1} \la \bar{\pi}_{ri}U^\pi_{rj}, \bar{\sigma}_{sk}U^\pi_{sl}\ra\\
		& = \sum^{d_\pi}_{r,s=1} \frac{1}{d_\pi}\delta_{\pi,\sigma}\delta_{rs}\delta_{ik}U^\pi_{rj}\overline{U^\pi_{sl}}\\
		& = \frac{\delta_{\pi,\sigma}\delta_{rs}\delta_{ik}}{d_\pi} = \la \pi_{ij}, \sigma_{kl}\ra.
	\end{align*}
Now we recall the embedding $L^\infty(G) \hookrightarrow B(L^2(G)),\; f\mapsto M_f$, where $M_f$ is the multiplication operator with respect to $f$, so that we have
	\begin{align*}
	(\widetilde{m_\mu})^*(f)(T_U)(1_G)
	& = \sum_{\pi\in \widehat{G}} c_{\bar{\pi}} d_\pi \sum^{d_\pi}_{i,j,k=1}\widehat{f}(\pi)_{ij}M_{\pi_{ik}}\circ T_U \circ M_{\pi_{kj}}(1_G)\\
	& = \sum_{\pi\in \widehat{G}} c_{\bar{\pi}} d_\pi \sum^{d_\pi}_{i,j,k,l=1}\widehat{f}(\pi)_{ij}\pi_{ik}\bar{\pi}_{lk}U^\pi_{lj}\\
	& = \sum_{\pi\in \widehat{G}} c_{\bar{\pi}} d_\pi \sum^{d_\pi}_{i,j=1}\widehat{f}(\pi)_{ij}[\pi \pi^* U^\pi]_{ij}\\
	& = \sum_{\pi\in \widehat{G}} c_{\bar{\pi}} d_\pi \text{Tr}(\widehat{f}(\pi)[U^\pi]^t) \cdot 1_G\\
	&  = \la (\widehat{f}(\pi))_{\pi\in \widehat{G}},  (c_{\bar{\pi}} [U^\pi]^t)_{\pi\in \widehat{G}}\ra \cdot 1_G
	\end{align*}
for $f\in \text{Pol}(G) := \text{span}\{\pi_{ij} : \pi\in \widehat{G}, 1\le i,j\le d_\pi\}$.	
Since $\text{Pol}(G)$ is dense in $C(G)$ we have
	\begin{align*}
	||(\widetilde{m_\mu})^*|| & \ge \sup\{ ||(\widetilde{m_\mu})^*(f)(T_U)(1_G)||_{L^2(G)}: ||f||_{C(G)} \le 1,\, f\in \text{Pol}(G)\}\\
	& = \sup\{ |\la (\widehat{f}(\pi))_{\pi\in \widehat{G}},  (c_{\bar{\pi}} [U^\pi]^t)_{\pi\in \widehat{G}}\ra |: ||f||_{C(G)} \le 1,\, f\in \text{Pol}(G)\}\\
	& = || \sum_{\pi\in \widehat{G}} c_{\bar{\pi}} d_\pi \text{Tr}([U^\pi]^t\pi^t) ||_{M(G)}.
	\end{align*}
Now we appeal to Theorem \ref{thm-Helgason} to get the conclusion we wanted.

The result in the Banach space category follows easily from the above calculation and \cite[Theorem 2.8]{LSS}. Note that \cite[Theorem 2.8]{LSS} deals with discrete groups but the same proof works for general locally compact groups.
\end{proof}

\subsection{The case of compact connected Lie groups and their real dimensions}\label{subsec-compact-connected}
In this subsection we apply Theorem \ref{thm-main-L1} in the case of compact connected Lie groups with the deformation measures coming from the Laplacian on the group. We will demonstrate that the representability of $L^1_\mu(G)$ can precisely detect the dimension of the group $G$.

\begin{ex}\label{ex-probability}
Let $\Omega_\pi = -\kappa_\pi I_\pi$ is the Casimir operator (in other words, Laplacian on $G$) for $\pi\in \widehat{G}$ with $\kappa_\pi \ge 0$. Then, there is a family of probability measures $\mu_t$, $t>0$ on $G$ such that
	$$\widehat{\mu_t}(\pi) = e^{-t\kappa_\pi}I_\pi.$$ 
Using $\mu_t$ we could find probability measures with polynomially decreasing Fourier coefficients by a standard argument. For $\alpha>0$ we recall the formula
	$$(1+n)^{-\alpha} = \frac{1}{\Gamma(\alpha)}\int^\infty_0 t^{\alpha-1}e^{-t}e^{-tn}dt,$$
where $\Gamma(\alpha) = \int^\infty_0 t^{\alpha-1}e^{-t}dt$ is the Gamma function. We define
	$$\nu_\alpha := \frac{1}{\Gamma(\alpha)}\int^\infty_0 t^{\alpha/2-1}e^{-t}\mu_t\,dt$$
Then, $\nu_\alpha$	is clearly a probability measure with
	\begin{equation}\label{eq-poly-measure}
	\widehat{\nu_\alpha}(\pi) = \frac{1}{(1+\kappa_\pi)^{\alpha/2}}I_\pi,\;\; \pi\in \widehat{G}.
	\end{equation}
\end{ex}

Now we recall some standard Lie theory we need. See \cite{Wall} or \cite[section 5]{LST} for the details. Let $\mathfrak{g}$ be the Lie algebra of $G$ with the decomposition $\mathfrak{g} = \mathfrak{z} + \mathfrak{g}_1$,
where $\mathfrak{z}$ is the center of $\mathfrak{g}$ and $\mathfrak{g}_1 = [\mathfrak{g}, \mathfrak{g}]$.
Let $\mathfrak{t}$ be a maximal abelian subalgebra of $\mathfrak{g}_1$ and $T = \text{exp}\mathfrak{t}$.
Then there are fundamental weights $\lambda_1, \cdots, \lambda_r, \Lambda_1,\cdots,\Lambda_l \in \mathfrak{g}^*$ with $r = \text{dim}\mathfrak{z}$ and $l=\text{dim}\mathfrak{t}$ such that
any $\pi \in \widehat{G}$ is in one-to-one correspondence with its associated highest weight $\Lambda_\pi = \sum^r_{i=1}a_i\lambda_i + \sum^l_{j=1}b_j \Lambda_j$ with
$(a_i)^r_{i=1}\in \z^r$ and $(b_j)^l_{j=1}\in \z^l_+$. The 1-norm $||\pi||_1$ of $\pi$ is given by
	$$||\pi||_1 := \sum^r_{i=1}|a_i| + \sum^l_{j=1} b_j.$$
This 1-norm is known to be equivalent to $\sqrt{\kappa_\pi}$ from the Casimir operator. More precisely, there are positive constants $c_1$ and $c_2$ independent of $\pi$ such that (\cite[Lemma 5.6.6]{Wall})
	$$c_1||\pi||^2_1 \le  \kappa_\pi \le c_2||\pi||^2_1.$$
Moreover, the following summability condition is known.

\begin{prop}\label{prop-DR}(\cite[Lemma 3.1]{DR})
For $\alpha>0$ we have $\displaystyle \sum_{\pi\in \widehat{G}}\frac{d^2_\pi}{(1+||\pi||_1)^{2\alpha}}<\infty$ if and only if $\alpha > \frac{d(G)}{2}$, where $d(G)$ is the dimension of $G$ as a real Lie group.
\end{prop}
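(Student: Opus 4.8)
The plan is to translate the series into a sum over the highest-weight lattice and estimate it by a dyadic decomposition. With the parametrisation of $\widehat G$ recalled above, each $\pi\in\widehat G$ corresponds to a pair $(a,b)=((a_i)_{i=1}^{r},(b_j)_{j=1}^{l})\in\mathbb Z^{r}\times\mathbb Z_{\ge0}^{l}$ with $\|\pi\|_1=\sum_i|a_i|+\sum_j b_j$, and $d_\pi$ depends only on the semisimple coordinates $b$: by the Weyl dimension formula $d_\pi=Q(b)$, where $Q$ is a polynomial on $\mathbb R^{l}$ of total degree $N:=|\Phi^+|$ (the number of positive roots of $\mathfrak g_1$) whose top-degree homogeneous part $Q_N$ is a nonzero polynomial that is strictly positive on the open dominant chamber $\{b_j>0\ \forall j\}$. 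Two elementary facts will be used. First, $d(G)=\dim\mathfrak g=r+l+2N$, since a compact semisimple Lie algebra of rank $l$ has dimension $l+2N$. Second, there is a constant $C>0$ with $d_\pi=Q(b)\le C(1+|b|_1)^{N}$ for every $b$, and for each $\varepsilon>0$ a constant $c_\varepsilon>0$ with $d_\pi=Q(b)\ge c_\varepsilon(1+|b|_1)^{N}$ whenever $b$ lies in the solid subcone $C_\varepsilon:=\{b\in\mathbb R_{\ge0}^{l}:b_j\ge\varepsilon|b|_1\ \forall j\}$ of the dominant chamber; the latter holds because $Q_N$ attains a positive minimum on the compact slice $C_\varepsilon\cap\{|b|_1=1\}$, hence $Q_N(b)\gtrsim|b|_1^{N}$ on $C_\varepsilon$ by homogeneity, with the lower-order terms of $Q$ absorbed for large $|b|_1$ and $Q\ge1$ handling bounded $b$.

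For the ``if'' direction I would assume $\alpha>d(G)/2$, so $2\alpha>d(G)\ge r$. Using the upper bound on $d_\pi$ and the independence of $d_\pi$ from $a$,
\begin{align*}
\sum_{\pi\in\widehat G}\frac{d_\pi^{2}}{(1+\|\pi\|_1)^{2\alpha}}
&\le C^{2}\sum_{b\in\mathbb Z_{\ge0}^{l}}(1+|b|_1)^{2N}\sum_{a\in\mathbb Z^{r}}\frac{1}{(1+|a|_1+|b|_1)^{2\alpha}}\\
&\asymp\sum_{b\in\mathbb Z_{\ge0}^{l}}(1+|b|_1)^{2N+r-2\alpha}\ \asymp\ \sum_{k\ge0}2^{kl}\,2^{k(2N+r-2\alpha)}=\sum_{k\ge0}2^{k(d(G)-2\alpha)}<\infty,
\end{align*}
where I used $\sum_{a\in\mathbb Z^{r}}(1+|a|_1+t)^{-2\alpha}\asymp(1+t)^{r-2\alpha}$ (valid as $2\alpha>r$) and then grouped the $b$'s by $|b|_1\asymp2^{k}$, which contributes $\asymp2^{kl}$ lattice points. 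For the ``only if'' direction I would assume $\alpha\le d(G)/2$. If $2\alpha\le r$ the inner $a$-sum already diverges, so suppose $r<2\alpha\le d(G)$; restricting the series to $b\in C_\varepsilon\cap\mathbb Z^{l}$ and running the same computation with the lower bound on $d_\pi$ yields $\sum_{\pi}d_\pi^{2}(1+\|\pi\|_1)^{-2\alpha}\gtrsim\sum_{k\ge0}2^{k(d(G)-2\alpha)}=\infty$, since $C_\varepsilon$ has positive solid angle in $\mathbb R^{l}$ (so it still carries $\asymp2^{kl}$ lattice points with $|b|_1\asymp2^{k}$) and $d(G)-2\alpha\ge0$. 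Replacing the $a$-sums and the dyadic shells by the corresponding integrals here is routine, as the summands vary slowly in $\|\pi\|_1$.

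The step I expect to be the main obstacle is the two-sided bound $d_\pi\asymp(1+\|\pi\|_1)^{N}$ on a family of lattice points of positive density, i.e.\ the strict positivity of the top-degree part $Q_N$ of the Weyl dimension polynomial on the open chamber and the resulting lower bound on $C_\varepsilon$; the remainder is routine power-counting. As an alternative that sidesteps the Lie-theoretic combinatorics: by Peter--Weyl the $\pi$-isotypic subspace of $L^{2}(G)$ has dimension $d_\pi^{2}$ and the Casimir acts on it by the scalar $\kappa_\pi$, so, since $\|\pi\|_1^{2}\asymp\kappa_\pi$, the series is comparable to $\text{Tr}\big((1+(-\Delta_G))^{-\alpha}\big)$ for the Laplace--Beltrami operator $\Delta_G$ on $G$, and by Weyl's eigenvalue asymptotics on the $d(G)$-dimensional compact manifold $G$ this trace is finite exactly when $2\alpha>d(G)$. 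I would present the self-contained lattice argument with the spectral interpretation as a remark.
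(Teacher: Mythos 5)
Your proposal is correct, but it is worth noting that the paper itself offers no argument at all here: Proposition \ref{prop-DR} is quoted verbatim from Dasgupta--Ruzhansky \cite[Lemma 3.1]{DR}, and the proof in that reference is essentially your ``alternative'' route, i.e.\ identifying $\sum_\pi d_\pi^2(1+\kappa_\pi)^{-\alpha}$ with the trace of $(1-\Delta_G)^{-\alpha}$ on $L^2(G)$ (each $\pi$-isotypic block has dimension $d_\pi^2$ and Casimir eigenvalue $\kappa_\pi$) and invoking the Weyl eigenvalue counting law on the $d(G)$-dimensional manifold $G$, together with the two-sided equivalence $\kappa_\pi\asymp\|\pi\|_1^2$ already recorded in the paper. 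Your primary argument is a genuinely different, self-contained route: the Weyl dimension formula gives $d_\pi=Q(b)$ of degree $N=|\Phi^+|$ with top part strictly positive on the open chamber, the identity $d(G)=r+l+2N$ converts the power counting into the clean exponent $d(G)-2\alpha$, and the dyadic shell estimates are routine; the two-sided bound $d_\pi\asymp(1+|b|_1)^N$ on a solid subcone $C_\varepsilon$, which you correctly flag as the key point, is exactly what makes the divergence direction work. The trade-off is that the lattice argument avoids spectral theory but leans on the explicit parametrisation of $\widehat G$ by $(a,b)\in\mathbb{Z}^r\times\mathbb{Z}_{\ge 0}^l$: for the divergence direction you need the weights you sum over to actually occur with positive density (surjectivity, or at least a finite-index sublattice of highest weights intersected with the cone), which is part of the setup taken from \cite{LST,Wall} and stated in Section \ref{subsec-compact-connected}, so your use of it is legitimate but should be cited rather than treated as automatic; the spectral proof is insensitive to this issue, which is presumably why \cite{DR} argue that way. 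Either write-up would serve as a proof of the proposition; since the paper only cites \cite{DR}, including your lattice computation would actually add content.
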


By combining Theorem \ref{thm-main-L1} and Proposition \ref{prop-DR} we get the following.

\begin{thm}
Let $\nu_\alpha$ is the probability measure from \eqref{eq-poly-measure}. Then, $L^1_{\nu_\alpha}(G)$ is (completely) representable as an operator algebra if and only if $\alpha > \frac{d(G)}{2}$.
\end{thm}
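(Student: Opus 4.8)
The plan is to reduce the statement to a direct application of Theorem~\ref{thm-main-L1} combined with the summability criterion in Proposition~\ref{prop-DR}. The key observation is that the deformation measure $\nu_\alpha$ from \eqref{eq-poly-measure} is central, since $\widehat{\nu_\alpha}(\pi) = (1+\kappa_\pi)^{-\alpha/2} I_\pi$; thus the ``if and only if'' (rather than just the one-sided implication) version of Theorem~\ref{thm-main-L1} applies. In particular, $L^1_{\nu_\alpha}(G)$ is (completely) representable as an operator algebra if and only if $\nu_\alpha \in L^2(G)$.

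\medskip

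\noindent Next I would translate the condition $\nu_\alpha \in L^2(G)$ into a summability condition on $\widehat{G}$ via the Plancherel theorem for compact groups. Since $\widehat{\nu_\alpha}(\pi)$ is a scalar multiple $(1+\kappa_\pi)^{-\alpha/2}$ of the identity $I_\pi$, we have
	$$\|\nu_\alpha\|^2_{L^2(G)} = \sum_{\pi\in\widehat{G}} d_\pi \|\widehat{\nu_\alpha}(\pi)\|^2_{S^2_{d_\pi}} = \sum_{\pi\in\widehat{G}} d_\pi \cdot \frac{d_\pi}{(1+\kappa_\pi)^\alpha} = \sum_{\pi\in\widehat{G}} \frac{d^2_\pi}{(1+\kappa_\pi)^\alpha}.$$
So $\nu_\alpha \in L^2(G)$ precisely when this series converges.

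\medskip

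\noindent Finally I would invoke the equivalence $c_1\|\pi\|^2_1 \le \kappa_\pi \le c_2\|\pi\|^2_1$ coming from Wallach's estimate, which shows that $(1+\kappa_\pi)^\alpha$ is comparable (up to multiplicative constants independent of $\pi$, absorbed after noting $1+\kappa_\pi$ and $(1+\|\pi\|_1)^2$ are comparable) to $(1+\|\pi\|_1)^{2\alpha}$. Hence the series $\sum_{\pi} d^2_\pi (1+\kappa_\pi)^{-\alpha}$ converges if and only if $\sum_{\pi} d^2_\pi (1+\|\pi\|_1)^{-2\alpha}$ does, and by Proposition~\ref{prop-DR} this holds if and only if $\alpha > \frac{d(G)}{2}$. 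Chaining these equivalences gives the claim, and the same argument in the Banach-space category follows from the last assertion of Theorem~\ref{thm-main-L1}. There is no real obstacle here; the only minor care needed is checking that $1+\kappa_\pi$ and $(1+\|\pi\|_1)^2$ are two-sided comparable uniformly in $\pi$ (which follows immediately from Wallach's inequalities together with the fact that $\kappa_\pi = 0$ exactly for the trivial representation, where $\|\pi\|_1=0$ as well), so that the comparison of the two series is legitimate.
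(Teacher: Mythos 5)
Your proposal is correct and follows exactly the paper's route: the paper proves this theorem by combining Theorem \ref{thm-main-L1} (whose converse applies since $\nu_\alpha$ is central) with Proposition \ref{prop-DR}, and your Plancherel computation plus the Wallach comparison $c_1\|\pi\|_1^2\le\kappa_\pi\le c_2\|\pi\|_1^2$ are precisely the (unstated) details that make that combination work.
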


\begin{rem}\label{rem-main-L1}
	\begin{enumerate}
		\item
		The above theorem tells us that the representability of  $L^1_{\nu_\alpha}(G)$ as an operator algebra precisely detects the dimension of the underlying group.
		\item
		The 1-norm is also known to be equivalent to the length function $\tau$ on $\widehat{G}$, which is given as follows. Let $\chi_i$ be the character of $G$ associated to the highest weight $\lambda_i$ and $\pi_j$ be the irreducible representation associated to the weight $\Lambda_j$. It is well-known that $S=\{\pm \chi_i, \pi_j : 1\le i\le r, 1\le j \le l\}$ generates $\widehat{G}$, i.e. $\bigcup_{k\ge 1} S^{\otimes k} = \widehat{G}$, where $S^{\otimes k} =\{\pi \in \widehat{G} : \pi \subset \sigma_1 \otimes \cdots \otimes \sigma_k \;\; \text{where}\;\; \sigma_1, \cdots, \sigma_k \in S\cup\{1\} \}$, $k\ge 1$. Then, $\tau  : \widehat{G} \to \mathbb{N}\cup\{0\}$ is given by
	$$\tau(\pi) := k,\;\;\text{if}\;\; \pi\in S^{\otimes k}\backslash S^{\otimes(k-1)}.$$
Then, we have some constant $C>0$ such that (see the proof of \cite[Theorem 5.4]{LST})
	$$||\pi||_1 \le C \tau(\pi) \le C||\pi||_1,\; \pi \in \widehat{G}.$$	
This justifies the statement that the quantity $\sqrt{\kappa_\pi}$ describes the growth rate of $\widehat{G}$.
		\item
		In the case of compact connected Lie groups we have a replacement of Theorem \ref{thm-Helgason} in the proof of the negative direction of Theorem \ref{thm-main-L1} as follows. Suppose that  $L^1_\mu(G)$ is completely representable as an operator algebra and $\mu$ is central with $\widehat{\mu}(\pi) = c_\pi I_\pi$, $\pi\in \widehat{G}$. Recall that For any choice of unitary $U = (U^\pi)_{\pi\in \widehat{G}}$, $U^\pi \in \mathcal{U}(d_\pi)$, we have
	$$||(\widetilde{m_\mu})^*|| \ge || \sum_{\pi\in \widehat{G}} c_\pi d_\pi \text{Tr}([U^\pi]^t\pi^t) ||_{M(G)}.$$
Now we put $U^\pi = r_\pi I_\pi$, where $(r_\pi)_{\pi\in \widehat{G}}$ is an I.I.D. family of Bernoulli variables. We use cotype 2 condition of $M(G)$ to get
	$$\mathbb{E} || \sum_{\pi\in \widehat{G}} c_\pi d_\pi r_\pi \text{Tr}(\pi^t) ||_{M(G)}\ge C (\sum_{\pi\in \widehat{G}} d^2_\pi |c_\pi|^2||\chi_\pi||^2_1)^{\frac{1}{2}}$$
for some constant $C>0$, where $\chi_\pi = \text{Tr}(\pi) = \text{Tr}(\pi^t)$ is the character function associated to $\pi$. Recall that a Banach space $X$ is said to be of cotype 2 if there is a constant $D>0$ such that
	$$\mathbb{E} || \sum_i r_i x_i ||_{X}\ge D (\sum_i ||x_i||^2_X)^{\frac{1}{2}}$$
for any $(x_i) \subseteq X$ and an I.I.D. family of Bernoulli variables $(r_i)$. Note that the dual of a $C^*$-algebra is known to be of cotype 2 (\cite{Fa}) and $M(G) \cong C_0(G)^*$. We finally note that that there is a constant $C'>0$ such that
	$$||\chi_\pi||_1 \ge C'$$
for any $\pi \in \widehat{G}$(\cite{Pr}), which leads us to the conlusion we wanted.

\end{enumerate}
\end{rem}

\section{Representability of the deformed Fourier algebras on discrete groups as operator algebras}

In this section $G$ is always a discrete group.

\begin{lem}
Let $w \in M_{cb}A(G)$ with norm 1. Then the associated multiplier $L_w$ is injective with dense range if and only if $w$ has no zero value.
\end{lem}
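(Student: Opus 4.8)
The plan is to reduce the statement to two standard structural facts about the Fourier algebra of a discrete group and then argue pointwise, exactly in the spirit of the compact-group lemma proved earlier for the measure multiplier $M_\mu$. The two facts I would invoke are: (i) for discrete $G$ every finitely supported function lies in $A(G)$, and $C_c(G)$ is norm-dense in $A(G)$; and (ii) $A(G)$ embeds contractively into $c_0(G)$, so each point evaluation $g\mapsto g(x)$, $x\in G$, is a bounded linear functional on $A(G)$. Fact (i) holds because $\delta_e\in P(G)$ --- the matrix $[\delta_e(g_i^{-1}g_j)]$ is the identity, hence positive definite --- so every point mass $\delta_x$ is a translate of $\delta_e$ and lies in $B(G)$; therefore $B(G)\cap C_c(G)=C_c(G)$, and by definition $A(G)=\overline{C_c(G)}$ in the $B(G)$-norm. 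Fact (ii) is classical.

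Granting these, the argument runs as follows. Suppose first that $w$ has no zero value. If $w\cdot g=0$ for some $g\in A(G)$, then $w(x)g(x)=0$ for every $x$, forcing $g=0$; so $L_w$ is injective. For the range, given $h\in C_c(G)$ the pointwise quotient $h/w$ is again finitely supported, hence belongs to $A(G)$, and $L_w(h/w)=w\cdot(h/w)=h$; thus the range of $L_w$ contains $C_c(G)$, which is dense in $A(G)$ by (i), so $L_w$ has dense range. Conversely, if $w(x_0)=0$ for some $x_0\in G$, then $\delta_{x_0}\in A(G)$ is nonzero while $L_w(\delta_{x_0})=w(x_0)\,\delta_{x_0}=0$, so $L_w$ fails to be injective; moreover its range then lies in the proper closed subspace $\{h\in A(G):h(x_0)=0\}$ --- proper and closed by (ii) --- so it also fails to be dense. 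Either way the hypothesis is violated, and both implications are established.

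I expect no real obstacle beyond recording facts (i) and (ii): once they are in hand the proof is a short pointwise computation, with no operator-space or complete-boundedness input needed. The one genuinely discreteness-dependent ingredient is the inclusion $C_c(G)\subseteq A(G)$ underlying the ``division by $w$'' step; on a non-discrete group one would instead have to replace this by an approximation argument, which is why the lemma is stated for discrete $G$.
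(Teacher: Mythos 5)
Your proof is correct and takes essentially the same route as the paper's: injectivity and dense range are checked pointwise using the facts that the point masses $\delta_x$ lie in $A(G)$ for discrete $G$, that their span (i.e.\ $C_c(G)$) is dense in $A(G)$, and that $L_w(\delta_x)=w(x)\delta_x$. Your extra details (justifying $\delta_x\in A(G)$, the division step $h/w$, and the failure of density when $w(x_0)=0$ via boundedness of point evaluations) simply flesh out what the paper leaves implicit.
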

\begin{proof}
Suppose that $w$ has no zero value, then $L_w$ has dense range since the image of $L_w$ contains $\{\delta_x: x\in G\}$, which is dense in $A(G)$. The injectivity of $L_w$ is clear. Conversely, if  $L_w$ is injective (with dense range), then $w(x)\delta_x\neq 0$ for all $x\in G$, so that $w(x)\neq 0$ for all $x\in G$.
\end{proof}

In this section we will provide a characterization of complete representatibility of the deformed algebra $A_w(G)$ as an operator algebra. As before we focus on the equivalent algebra $(A(G), \widetilde{m_w})$ with the dual perspective. Indeed, we can easily see that $A_w(G)$ is completely representable as an operator algebra if and only if the map
	$$(\widetilde{m_w})^*: VN(G) \to VN(G) \bar{\otimes} VN(G)$$
extends to a completely bounded map $(\widetilde{m_w})^*: VN(G) \to VN(G) \otimes_{eh} VN(G)$. We call the map $(\widetilde{m_w})^*$, {\it the deformed co-multiplication} as before. This time it is quite straight forward to see how $(\widetilde{m_w})^*$ acts on concrete elements of $VN(G)$, so we just record it without proof.

\begin{prop}
For $T \sim \sum_{g\in G}\alpha_g \lambda_g \in VN(G)$ we have
	$$(\widetilde{m_w})^*(T) \sim \sum_{g\in G}w(g)\alpha_g \lambda_g \otimes \lambda_g.$$
\end{prop}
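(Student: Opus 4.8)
The plan is to compute $(\widetilde{m_w})^*(T)$ directly from the definition of the adjoint, by evaluating it against the point masses, which are the natural "test functions" in the predual. Since $G$ is discrete, each $\delta_g$ lies in $A(G)$ (indeed $\delta_g(x) = \la \lambda(x)\delta_e, \delta_g\ra$ exhibits it as a coefficient function of the left regular representation), the finitely supported functions are dense in $A(G)$, and $A(G)\prt A(G) \cong A(G\times G)$; hence $\mathrm{span}\{\delta_g\otimes\delta_h : g,h\in G\}$ is norm dense in $A(G)\prt A(G)$, so it is weak-$*$ total in $VN(G) \bar{\otimes} VN(G) = VN(G\times G)$. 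Consequently an element $S\in VN(G\times G)$ with $S\sim\sum_{(g,h)}\beta_{g,h}\,\lambda_g\otimes\lambda_h$ is completely determined by the scalars $\beta_{g,h} = \la S, \delta_g\otimes\delta_h\ra$, and it suffices to compute these for $S = (\widetilde{m_w})^*(T)$.

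First I would recall from Remark \ref{rem-equiv-alg} that $\widetilde{m_w}(f\otimes g) = f\cdot w\cdot g$ (pointwise product) for $f,g\in A(G)$. By the definition of the adjoint this gives
$$\la (\widetilde{m_w})^*(T),\, \delta_g\otimes\delta_h \ra = \la T,\, \widetilde{m_w}(\delta_g\otimes\delta_h)\ra = \la T,\, \delta_g\cdot w\cdot\delta_h\ra .$$
A one-line pointwise computation shows $\delta_g\cdot w\cdot\delta_h = w(g)\,\delta_{g,h}\,\delta_g$ (the product $\delta_g\cdot\delta_h$ vanishes unless $g=h$, and $w\cdot\delta_g = w(g)\,\delta_g$). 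Since the $g$-th Fourier coefficient of $T\sim\sum_{g}\alpha_g\lambda_g$ is recovered by the duality bracket as $\la T,\delta_g\ra = \alpha_g$, we obtain $\beta_{g,h} = w(g)\,\delta_{g,h}\,\alpha_g$, and therefore $(\widetilde{m_w})^*(T)\sim\sum_{g\in G} w(g)\alpha_g\,\lambda_g\otimes\lambda_g$. One should also note in passing that the resulting formal series genuinely represents an element of $L^2(VN(G\times G))$: every $w\in M_{cb}A(G)$ is a bounded function, so $\sum_g |w(g)\alpha_g|^2 \le \norm{w}_\infty^2\sum_g|\alpha_g|^2 <\infty$, consistently with $(\widetilde{m_w})^*(T)\in VN(G\times G)$.

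The argument is essentially bookkeeping, and there is no serious obstacle; the only points needing care are the identification $VN(G) \bar{\otimes} VN(G) = VN(G\times G)$ as the dual of $A(G)\prt A(G) = A(G\times G)$ together with the corresponding pairing convention, and the observation that a formal Fourier expansion of a group von Neumann algebra element is uniquely pinned down by its pairings against the $\delta_g$. This is why the statement can safely be recorded without proof.
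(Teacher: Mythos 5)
Your argument is correct, and it is precisely the routine verification the authors have in mind when they write that the formula is ``quite straightforward'' and record the proposition without proof: test the adjoint against the norm-dense span of $\delta_g\otimes\delta_h$ in $A(G)\widehat{\otimes}A(G)\cong A(G\times G)$, use $\widetilde{m_w}(\delta_g\otimes\delta_h)=w(g)\,\delta_{g,h}\,\delta_g$, and read off the coefficients via $\la T,\delta_g\ra=\alpha_g$. All the identifications you invoke ($A(G)\widehat{\otimes}A(G)\cong A(G\times G)$, its dual $VN(G)\,\bar{\otimes}\,VN(G)\cong VN(G\times G)$, and the boundedness of $w$) are standard and used implicitly throughout the paper, so there is nothing to add.
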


For the negative direction of our main result we need the following Lemma, which is a direct consequence of Lust-Piquard's non-commutative version of Kahane-Katznelson-de Leeuw's coefficient problem.

\begin{lem}\label{lem-coefficient-VN(G)}
Let $\tau(\cdot) = \la \;\cdot \; \delta_e, \delta_e \ra$ be the vacuum state on $VN(G)$. There is a constant $K>0$ such that for any $(c_g) \in \ell^2(G)$ with norm $\le 1$ there exist $T\in VN(G)$ with norm $\le K$ such that
	$$|\tau(T\lambda^*_g)|\ge |c_g|,\; g\in G.$$
\end{lem}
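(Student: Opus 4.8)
The plan is to deduce the statement directly from Lust-Piquard's noncommutative Kahane--Katznelson--de Leeuw theorem. Recall that classically, the Kahane--Katznelson--de Leeuw theorem asserts that for any sequence $(c_n) \in \ell^2(\z)$ there is a continuous function on the circle whose Fourier coefficients dominate $(|c_n|)$ in modulus, with the sup-norm controlled by a universal multiple of $\|(c_n)\|_2$. Lust-Piquard's work provides the analogue where the circle is replaced by a (non-commutative) setting: given a discrete group $G$ and $(c_g) \in \ell^2(G)$, one can produce an element $T$ in the reduced group $C^*$-algebra $C^*_\lambda(G) \subseteq VN(G)$ with $\|T\| \le K \|(c_g)\|_2$ (for a universal constant $K$) whose ``Fourier coefficients'' $\tau(T\lambda_g^*)$ dominate $|c_g|$. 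So the first step is simply to cite the precise form of Lust-Piquard's result and observe that $\tau(T\lambda_g^*)$ is exactly the coefficient $\alpha_g$ when $T \sim \sum_h \alpha_h \lambda_h$, since $\tau(\lambda_h \lambda_g^*) = \delta_{h,g}$.

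The one point that needs a small amount of care is the matter of \emph{real} versus \emph{complex} (or \emph{modulus}) coefficients: the available statement of the coefficient theorem may only directly give $|\tau(T\lambda_g^*)| \ge c_g$ for nonnegative real sequences $(c_g)$, or it may give control of the real part only. To pass to arbitrary $(c_g) \in \ell^2(G)$ with $\|(c_g)\|_2 \le 1$, I would apply the nonnegative-sequence version to $(|c_g|)_{g\in G}$, which is again in the unit ball of $\ell^2(G)$, and obtain $T$ with $\|T\| \le K$ and $|\tau(T\lambda_g^*)| \ge |c_g|$ for all $g$. That is exactly the asserted conclusion. If instead the cited form controls only real parts, one can split into real and imaginary parts and into positive and negative parts, handle each of the (at most four) nonnegative pieces separately, and combine; this only changes the universal constant $K$.

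The main obstacle, such as it is, is locating and quoting the correct reference for the noncommutative coefficient theorem in the exact generality needed here (arbitrary discrete $G$, with the conclusion phrased via the vacuum state $\tau$ rather than via an abstract trace), and making sure the constant is genuinely universal (independent of $G$); one should double-check that Lust-Piquard's argument, originally phrased for a specific von Neumann algebra with trace, transfers verbatim to $VN(G)$ with $\tau$, which it does because $\tau$ is a faithful normal tracial state and $\{\lambda_g\}_{g \in G}$ is an orthonormal system in $L^2(VN(G))$ behaving exactly like the characters used in the classical statement.

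\begin{proof}
This is a direct consequence of the noncommutative Kahane--Katznelson--de Leeuw coefficient theorem of Lust-Piquard. Write $T \sim \sum_{h\in G}\alpha_h \lambda_h$ for $T \in VN(G)$; then since $\tau(\lambda_h\lambda_g^*) = \la \lambda_h, \lambda_g\ra = \delta_{h,g}$, we have $\tau(T\lambda_g^*) = \alpha_g$. Given $(c_g)\in \ell^2(G)$ with $\norm{(c_g)}_2 \le 1$, apply the coefficient theorem to the nonnegative sequence $(|c_g|)_{g\in G}$, which again lies in the unit ball of $\ell^2(G)$: this produces a universal constant $K>0$ and an element $T\in VN(G)$ (in fact in $C^*_\lambda(G)$) with $\norm{T}\le K$ such that $|\tau(T\lambda_g^*)| \ge |c_g|$ for every $g\in G$. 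This is precisely the claimed statement.
\end{proof}
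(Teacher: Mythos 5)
Your overall strategy --- reduce the lemma to Lust-Piquard's noncommutative Kahane--Katznelson--de Leeuw theorem --- is exactly the paper's, but your write-up leaves the entire content of the proof inside the citation, and the citation is not available in the form you assume. The result the paper invokes (Theorem 4 of \cite{Piq}) is not a statement about scalar sequences $(c_g)\in\ell^2(G)$ on a discrete group; it is a statement about a countable family of \emph{operators} $(A_n)$ in a von Neumann algebra with trace $\tau$, subject to the hypotheses (i) $\mathrm{Re}\,\tau(A_i^*A_iA_j^*A_k)=0$ and (ii) $\mathrm{Re}\,\tau(A_iA_i^*A_jA_k^*)=0$ for $j\neq k$, together with $\|\sum_n|A_n|_s^2\|\le 1$ where $|A|_s^2=(A^*A+AA^*)/2$; the conclusion is the existence of $T$ with $\|T\|\le K$ and $|\tau(T^*A_n)|\ge\tau(A_n^*A_n)$. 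To obtain the lemma you must instantiate $A_g=c_g\lambda_g$ (restricting to the countable support of $(c_g)$), check (i) and (ii) --- which hold because $\lambda_{g_2}^*\lambda_{g_3}=\lambda_{g_2^{-1}g_3}$ and $\tau(\lambda_h)=\delta_{h,e}$, so the relevant traces vanish outright when $g_2\neq g_3$ --- check $\|\sum_g|A_g|_s^2\|=\sum_g|c_g|^2\le 1$, and finally convert the conclusion $|c_g|\,|\tau(T^*\lambda_g)|=|\tau(T^*A_g)|\ge\tau(A_g^*A_g)=|c_g|^2$ into $|\tau(T^*\lambda_g)|\ge|c_g|$ by dividing by $|c_g|$ when $c_g\neq 0$ (the inequality being vacuous otherwise). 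None of these steps is hard, but together they \emph{are} the proof; your version asserts that the transfer to $VN(G)$ works ``because $\{\lambda_g\}$ is an orthonormal system in $L^2(VN(G))$,'' which is a plausibility remark rather than a verification --- conditions (i) and (ii) are fourth-moment conditions, not orthonormality.

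Two smaller points. Your concern about real parts versus moduli is a red herring for this particular theorem: its conclusion already controls $|\tau(T^*A_n)|$, so no splitting into four nonnegative pieces is needed. And the reduction to the nonnegative sequence $(|c_g|)$ is likewise unnecessary once you use the operator form, since the phases of the $c_g$ are absorbed into the operators $A_g=c_g\lambda_g$. In short: right reference, right idea, but as written the proposal points to a theorem that does not exist in the quoted form, and the missing instantiation-and-verification is precisely the content of the paper's proof.
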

\begin{proof}
Fix a sequence $(c_g)_{g\in G}$ with $\sum_g |c_g|^2\leq 1$ and set $A_g =c_g\lambda_g$. Here, we may assume that the index set of sequence is countable. Then, the sequence $(A_g)$ satisfies the conditions (i) and (ii) in \cite[Theorem 4]{Piq}. More precisely, for $g_1,g_2,g_3(g_2\neq g_3)$ we have
	$$\text{Re}(\tau(A_{g_1}^*A_{g_1}A_{g_2}^*A_{g_3}))=|c_{g_1}|^2 \text{Re}(\tau(\overline{c_{g_2}}c_{g_3}\lambda_{g_2^{-1}g_3}))=0$$
and
	$$\text{Re}(\tau(A_{g_1}A_{g_1}^*A_{g_2}A_{g_3}^*))=|c_{g_1}|^2 \text{Re}(\tau(c_{g_2}\overline{c_{g_3}}\lambda_{g_2g_3^{-1}}))=0.$$
Moreover,  we have
	$$||\sum_g|A_g|_s^2|| = ||\sum_g \frac{A_g^*A_g+A_gA_g^*}{2}|| = \sum_g |c_g|^2\le 1.$$ By \cite[Theorem 4]{Piq} there is $T\in VN(G)$ with norm $\le K$ such that
	$$|c_g||\tau(T^*\lambda_g)| = |\tau(T^*A_g)| \ge \tau(A_g^*A_g)=|c_g|^2,\; g\in G,$$
which is the conclusion we wanted.	
\end{proof}

Here comes the main result of this section.

\begin{thm}\label{thm-main-A(G)}
The deformed algebra $A_w(G)$ is completely representable as an operator algebra if and only if $w\in \ell^2(G)$.
\end{thm}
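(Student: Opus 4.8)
The plan is to prove Theorem \ref{thm-main-A(G)} by reducing, via Blecher's characterization, to a statement about the deformed co-multiplication $(\widetilde{m_w})^*: VN(G)\to VN(G)\bar\otimes VN(G)$ and its extendability to $VN(G)\otimes_{eh}VN(G)$, exactly as in the discussion preceding the theorem. By the preceding Proposition we know $(\widetilde{m_w})^*$ acts on $T\sim\sum_g\alpha_g\lambda_g$ by $(\widetilde{m_w})^*(T)\sim\sum_g w(g)\alpha_g\,\lambda_g\otimes\lambda_g$, so the question becomes whether this ``Schur-type'' diagonal map into the extended Haagerup tensor product is completely bounded.

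\textbf{Sufficiency ($w\in\ell^2(G)$ implies complete representability).} First I would exhibit an explicit Haagerup-type factorization $(\widetilde{m_w})^*(T)=A\odot B$ with the index set $I=G$, taking (after a choice of square root of the diagonal) $A$ the row with $g$-th entry $\sqrt{w(g)}\,\lambda_g$ multiplied appropriately against $T$, and $B$ the column with $g$-th entry $\sqrt{w(g)}\,\lambda_g$; more precisely one writes $(\widetilde{m_w})^*(T)=\bigl(\sum_g T\,p_g\otimes\lambda_g\bigr)\odot\bigl(\text{diagonal of }\sqrt{w(g)}\lambda_g\bigr)$-style so that the row absorbs $T$ and one factor $\sqrt{w}$ while the column carries $\sqrt{w}$. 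This mirrors the compact-group computation in the proof of Theorem \ref{thm-main-L1}: the row norm is controlled by $\|T\|_{VN(G)}$ (using $\sum_g p_g=1$ / orthogonality of the $\lambda_g$ in $L^2(VN(G))$ together with the von Neumann algebra norm of $T$), and the column norm squared equals $\|\,\sum_g w(g)\lambda_g^*\lambda_g\,\| $-type expression, which collapses to $\sum_g w(g)<\infty$ since $w\ge 0$ has $\ell^1$-summable... — here one must be careful: $w\in\ell^2(G)$ is the hypothesis, not $\ell^1$, so the correct factorization splits $w=w^{1/2}\cdot w^{1/2}$ only if $w^{1/2}\in\ell^2$, i.e. $w\in\ell^1$. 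Hence the right factorization is the one that does \emph{not} split $w$: put all of $w$ into one side and use that $(w(g))_g\in\ell^2(G)$ defines an element of $L^2(VN(G))$, appealing to the second description of $\otimes_{eh}$ in the preliminaries. Concretely, checking complete boundedness amounts to bounding $\|(\mathrm{id}_n\otimes(\widetilde{m_w})^*)(F)\|_{eh}$ for $F\in M_n(VN(G))$ by $\|F\|_{M_n(VN(G))}\cdot\|w\|_{\ell^2(G)}$, via a row/column estimate in which the ``$w$-side'' contributes $\|\sum_g|w(g)|^2\lambda_g^*\lambda_g\|^{1/2}=\|w\|_{\ell^2(G)}$ and the ``$F$-side'' contributes $\|\int\!\!/\sum F F^*\|^{1/2}\le\|F\|$; this is the literal transcription of the $L^1_\mu$ argument with $\widehat G$ replaced by $G$, $d_\pi=1$, and $\widehat\mu(\bar\pi)$ replaced by $w(g)$.

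\textbf{Necessity (complete representability implies $w\in\ell^2(G)$).} Suppose $(\widetilde{m_w})^*$ extends completely boundedly into $VN(G)\otimes_{eh}VN(G)$; I would compose with the completely isometric embedding $VN(G)\otimes_{eh}VN(G)\hookrightarrow CB^\sigma(B(\ell^2(G)),B(\ell^2(G)))$ from the preliminaries, so $(\widetilde{m_w})^*(T)$ acts as $X\mapsto\sum_g w(g)\alpha_g\,\lambda_g X\lambda_g$. Testing this map on a carefully chosen $X$ — for instance the rank-one operator $\delta_e\otimes\delta_e$, or a suitable $\ell^2$-vector — and pairing with $\delta_e$, one extracts from $\|(\widetilde{m_w})^*\|_{cb}\ge\|(\widetilde{m_w})^*(T)\|_{eh}$ an inequality of the form $\|(\widetilde{m_w})^*\|_{cb}\,\|T\|\ge\bigl|\sum_g w(g)\overline{\alpha_g}\,c_g\bigr|$ for appropriate scalars; taking the supremum over admissible $T$ forces $(w(g)c_g)_g$ to lie in $\ell^2(G)$ for the relevant test sequences. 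The clean way to finish is Lemma \ref{lem-coefficient-VN(G)}: given any $(c_g)\in\ell^2(G)$ with $\|(c_g)\|_2\le 1$ there is $T\in VN(G)$, $\|T\|\le K$, with $|\tau(T\lambda_g^*)|\ge|c_g|$; feeding such $T$ into the duality pairing $\langle(\widetilde{m_w})^*(T),\,\cdot\,\rangle$ against the functional in $(VN(G)\otimes_{eh}VN(G))_*$ that reads off the $(\lambda_g,\lambda_g)$-diagonal coefficient $w(g)\tau(T\lambda_g^*)$, and using that this diagonal-extraction functional has norm controlled by the $\ell^2$-norm of its ``coefficient sequence'', yields $\sum_g|w(g)|^2|c_g|^2\lesssim K^2\|(\widetilde{m_w})^*\|_{cb}^2$ uniformly over all unit $(c_g)\in\ell^2(G)$, whence $w\in\ell^\infty$ is upgraded: actually the uniform bound over \emph{all} unit $\ell^2$ sequences $(c_g)$ gives $\sup_g|w(g)|^2<\infty$ only, so instead one chooses $c_g$ supported on a finite set $F$ with $c_g=w(g)/(\sum_{h\in F}|w(h)|^2)^{1/2}$, making $\sum_g w(g)\overline{\alpha_g}c_g$ essentially $(\sum_{h\in F}|w(h)|^2)^{1/2}$, and letting $F\uparrow G$ forces $\sum_g|w(g)|^2<\infty$.

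\textbf{Main obstacle.} The delicate point is the necessity direction: matching the combinatorics of the Haagerup/extended-Haagerup product (the $\odot$ multiplication and its norm formula $\|X\|_{eh}=\min\|A\|\|B\|$) with the operator $VN(G)$-norm of the test element produced by Lemma \ref{lem-coefficient-VN(G)}, and in particular identifying the correct predual functional that extracts the diagonal coefficients $w(g)\tau(T\lambda_g^*)$ with the right norm. One must verify that evaluation-at-the-diagonal is genuinely bounded on $VN(G)\otimes_{eh}VN(G)$ with the $\ell^2$-norm of the coefficient sequence as the controlling quantity — this is where the Lust-Piquard coefficient lemma does the heavy lifting, playing the role that Helgason's theorem played in Theorem \ref{thm-main-L1}. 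Modulo this, both directions are transcriptions of the compact-group proof with $(\widehat G, d_\pi, \widehat\mu(\bar\pi))$ replaced by $(G, 1, w(g))$, and I would organize the write-up to emphasize that parallel.
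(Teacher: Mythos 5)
Your sufficiency argument, after your own self-correction, is exactly the paper's: do \emph{not} split $w$, but factor $(I_n\otimes\widetilde{m_w})^*(T)=A\odot B$ with the row carrying the Fourier data of $T$ (entries $A_g\otimes\lambda_g$) and the column carrying all of $w$ (entries $w(g)\,\mathrm{id}_n\otimes\lambda_g$), so that $\|B\|^2=\sum_g|w(g)|^2$ and $\|A\|^2=\|\sum_g A_gA_g^*\|\le\|TT^*\|$, the last inequality coming from $(\mathrm{id}_n\otimes\tau)(TT^*)=\sum_g A_gA_g^*$ with $\tau$ the vacuum state. That half is fine and matches the paper.

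The necessity direction, however, has a genuine gap: you never construct the bounded ``diagonal-extraction'' functional on $VN(G)\otimes_{eh}VN(G)$ that the argument hinges on, and the one concrete test object you propose, the rank-one operator $\delta_e\otimes\delta_e$, does not work: under the action $X\mapsto\sum_g w(g)\alpha_g\,\lambda_g X\lambda_g$ it only detects the $g=e$ term, since $(\delta_e\otimes\delta_e)\delta_g=0$ for $g\neq e$. The paper's device is the signed flip operator $X_r\in B(\ell^2(G))$, $X_r\delta_g=r_g\delta_{g^{-1}}$ with $|r_g|=1$; then $\lambda_g X_r\lambda_g\delta_e=r_g\delta_e$, so $\langle(\widetilde{m_w})^*(T)(X_r)\delta_e,\delta_e\rangle=\sum_g w(g)\alpha_g r_g$, and since $\|X_r\|=1$ this functional is contractive on the extended Haagerup tensor product via the $CB^\sigma$ embedding --- no $\ell^2$ control of the weights is needed, only their sup-norm. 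Relatedly, you assign the wrong role to Lemma \ref{lem-coefficient-VN(G)}: Lust-Piquard does not establish boundedness of diagonal extraction (that is entirely the job of $X_r$ and the embedding); it supplies, for each unit vector $(c_g)\in\ell^2(G)$, an element $T\in VN(G)$ with $\|T\|\le K$ and $|\tau(T\lambda_g^*)|\ge|c_g|$. Choosing the phases $r_g$ so that $w(g)\alpha_g r_g=|w(g)\alpha_g|$ and then taking $(c_g)$ proportional to $\overline{w(g)}$ on finite sets (your final normalization trick, which is correct) yields $\|(\widetilde{m_w})^*\|\ge K^{-1}\|w\|_{\ell^2(G)}$. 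With the operator $X_r$ inserted and the division of labor straightened out, your outline closes and coincides with the paper's proof; without it, the necessity direction does not go through as written.
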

\begin{proof}
We first check the positive direction. Suppose that
	$$T \sim \sum_{g\in G}A_g \lambda_g \in M_n(VN(G))$$
with $A_g\in M_n$. Then, we have
	\begin{align*}
	(I_n\otimes m_w)^*(T) & \sim \sum_{g\in G}w(g)A_g \otimes \lambda_g \otimes \lambda_g\\
	& = \sum_{g\in G} (A_g \otimes \lambda_g) \odot (w(g)id_n \otimes \lambda_g)\\
	& = \begin{bmatrix} \; \cdots & A_g\otimes \lambda_g & \cdots \;\; \end{bmatrix} \odot \begin{bmatrix} \vdots \\ w(g)id_n \otimes \lambda_g \\ \vdots \end{bmatrix}\\
	& = A \odot B.
	\end{align*}
Now we have $A\in M_G(M_n(VN(G)))$ and
	$$||A||^2 = || \sum_{g\in G} (A_g \otimes \lambda_g)(A_g \otimes \lambda_g)^* || = || \sum_{g\in G} A_gA^*_g||_{M_n}.$$
Moreover we have $B\in M_G(M_n(VN(G)))$ and
	$$||B||^2 = || \sum_{g\in G} (w(g)id_n \otimes \lambda_g)^*(w(g)id_n \otimes \lambda_g) || = \sum_{g\in G} |w(g)|^2.$$
Finally we observe that
	$$|| \sum_{g\in G} A_gA^*_g||_{M_n} \le ||TT^*||.$$
Indeed, let $\tau(\cdot) = \la \;\cdot \; \delta_e, \delta_e \ra$ be the vacuum state. Then we have
	$$(id_n \otimes \tau)(TT^*) = \sum_{g\in G} A_gA^*_g.$$
Combining all the above we get
	$$||\widetilde{m_w}||_{cb} \le ||w||_{\ell^2(G)}.$$
For the converse direction we let $T \sim \sum_{g\in G}\alpha_g \lambda_g \in VN(G)$. Moreover, we set $X_r\in B(\ell^2(G))$ by
	$$X_r(\delta_g) = r_g \delta_{g^{-1}},\; g\in G$$
where $r_g\in \Comp$	with $|r_g|=1$, $g\in G$. Then, we have
	$$(\widetilde{m_w})^*(T)(X_r)\delta_e = \left(\sum_{g\in G} w(g)\alpha_g r_g \right)\delta_e.$$
This allows us the follows estimate.
	\begin{equation}\label{eq-estimate-VN(G)}
	||(\widetilde{m_w})^*|| \ge \frac{|\sum_{g\in G} w(g)\alpha_g r_g|}{||T||_{VN(G)}}.
	\end{equation}
Now we fix $(c_g) \in \ell^2(G)$ with norm $\le 1$. Then, there is $T\in VN(G)$ (depending on $(c_g)$) with norm $\le K$ such that $|\tau(T\lambda^*_g)|\ge |c_g|$, $g\in G$ by Lemma \ref{lem-coefficient-VN(G)}. Then we choose $r_g$ (depending on $T$) so that $w(g)\alpha_g r_g = |w(g)\alpha_g| = |w(g)\tau(T\lambda^*_g)|$, $g\in G$. For these choice of $T$ and $(r_g)$ we get
	$$||(\widetilde{m_w})^*|| \ge \frac{1}{K} \sum_{g\in G} |w(g)\alpha_g| \ge \frac{1}{K} \sum_{g\in G} |w(g)c_g|.$$
The choice of 	$(c_g)$ is arbitrary, so that we get
	$$||(\widetilde{m_w})^*|| \ge \frac{1}{K} ||w||_{\ell^2(G)}.$$
\end{proof}

\begin{rem}
We also have a different route for the negative direction of the above theorem with a little bit of probabilistic flavor as in Remark \ref{rem-main-L1}, which allows us to avoid Lemma \ref{lem-coefficient-VN(G)}. We consider an I.I.D. family of Bernoulli variables $(r_g)_{g\in G}$. Then \eqref{eq-estimate-VN(G)} tells us that
	\begin{align*}
	||(\widetilde{m_w})^*||
	& \ge \mathbb{E}||\sum_{g\in G} w(g)\lambda_g r_g||_{(C^*_r(G))^*}\\
	& \ge C \left(\sum_{g\in G} ||w(g)\lambda_g ||^2_{(C^*_r(G))^*}\right)^{\frac{1}{2}} = C\cdot ||w||_{\ell^2(G)}
	\end{align*}
for some constant $C>0$. Here we used the fact that $(C^*_r(G))^*$ is of cotype 2 and $\lambda_g$ is understood as an element of $(C^*_r(G))^*$ given by $a \mapsto \tau(\lambda_g a)$.
\end{rem}

\subsection{The case of finitely generated groups and the growth order}\label{subsec-f.g.}
In this subsection we apply Theorem \ref{thm-main-A(G)} in the case of certain finitely generated groups with the deformation functions coming from the word length function. We will demonstrate that the complete representability of $A_w(G)$ can precisely detect the growth rate of the group $G$.

\begin{ex}\label{ex-PD-function}
Let $G$ be a finitely generated group with a generating set $S$. Let $|\cdot|$ be the word length associated to $S$. We consider the function given by
	$$w_t(g) = e^{-t|g|},\; g\in G.$$
If $G$ is either $\z^n$, a Coxeter group with the canonical generating set $S$, then $w_t$ is known to be a positive definite function. If $G$ is a hyperbolic group with the canonical generating set, then it is known that $w_t \in M_{cb}A(G)$ with
	$$M := \sup_{t>0}||w_t||_{M_{cb}A(G)} <\infty.$$
Note that $M=1$ in the previous case. Using $w_t$ we could find polynomially decreasing functions in $M_{cb}A(G)$ as before. For $\alpha>0$ we consider the function
	\begin{equation}\label{eq-poly-PD-function}
	W_\alpha(g) := \frac{1}{M(1+|g|)^\alpha},\; g\in G.
	\end{equation}
since we have
	$$W_\alpha = \frac{1}{\Gamma(\alpha)}\int^\infty_0 t^{\alpha/2-1}e^{-t}w_t\,dt$$
we have
	$$||W_\alpha||_{M_{cb}A(G)} \le \frac{1}{M\Gamma(\alpha)}\int^\infty_0 t^{\alpha/2-1}e^{-t}||w_t||_{M_{cb}A(G)}\,dt \le 1.$$
\end{ex}

Recall that a finitely generated group with a fixed generating set $S$ is said to be {\it polynomially growing} if there is a constant $C>0$ and $k>0$ such that
	$$|B(n)|\le C(n^k+1),\; n\ge 0,$$
where $B(n) =\{g\in G:|g|\le n \}$ is the $n$-ball. Recall also that the infimum $k_0$ of such $k$ is called the order of the polynomial growth of $G$ or the growth rate of $G$. Moreover, $G$ is said to be {\it exponentially growing} if there is a constant $a>1$ such that
	$$|B(n)|\ge a^n,\; n\ge 0.$$
The exponential growh rate of $G$ with respect to $S$ is defined by 
	$$\lambda = \lambda(G,S):=\lim_{n\rightarrow \infty}|B(n)|^{\frac{1}{n}}.$$	
\begin{rem}
	\begin{enumerate}
		\item For the group $G$ of polynomial growth, it is well known that $k_0$ has to be a natural number and
			$$|B(n)|\sim n^{k_0}, \; n\ge 0,$$
		i.e. there are constants $C_1, C_2>0$ such that
			$$C_1 n^{k_0} \le |B(n)| \le C_2 n^{k_0}, \; n\ge 0.$$	
		\item
		The polynomial growth rate $k_0$ of $G$ is known to be independent of the choice of the generating set $S$.
		\item
		Every finitely generated group has at most exponential growth. In other words, there is $b>0$ such that $|B(n)|\leq b^n$ for all $n$. Thus the above limit $\lambda(G,S)$ always exists by Fekete's subadditivity lemma.
		\item
		The condition $\lambda(G,S)>1$ implies that $\lambda(G,S')>1$ for any other symmetric generating set $S'$. However, $\inf_S \lambda(G,S)$ could be equal to 1.
	\end{enumerate}
\end{rem}

\begin{prop}
Let $G$ be a finitely generated group with a fixed generating set $S$.
	\begin{enumerate}
		\item Suppose $G$ is of polynomial growth of order $k_0$. For $\alpha>0$ we have
			$$\displaystyle \sum_{g\in G}\frac{1}{(1+|g|)^{2\alpha}}<\infty$$
		if and only if $\alpha > \frac{k_0}{2}$.
		\item Suppose that $G$ is exponential growing with the growth rate $\lambda$. For $t>0$ we have $\displaystyle \sum_{g\in G}\frac{1}{e^{2t|g|}}<\infty$ if $t > \frac{\log \lambda}{2}$ and $\displaystyle \sum_{g\in G}\frac{1}{e^{2t|g|}} = \infty$ if $t < \frac{\log \lambda}{2}$.
	\end{enumerate}
\end{prop}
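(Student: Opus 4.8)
The statement is a purely combinatorial counting fact about the ball-growth function, so the plan is to reduce both parts to estimating the "shell" sums $\sum_{n\geq 0} |S(n)|\,\rho(n)$, where $S(n) = B(n)\setminus B(n-1)$ is the sphere of radius $n$ and $\rho$ is either $n\mapsto (1+n)^{-2\alpha}$ or $n\mapsto e^{-2tn}$. Indeed, since $|g|$ takes only nonnegative integer values,
\begin{equation*}
\sum_{g\in G} \rho(|g|) = \sum_{n\geq 0} |S(n)|\,\rho(n) = \sum_{n\geq 0}\bigl(|B(n)|-|B(n-1)|\bigr)\rho(n),
\end{equation*}
with the convention $|B(-1)|=0$. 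The first step is therefore to record that convergence of $\sum_g \rho(|g|)$ is equivalent to convergence of $\sum_n |B(n)|\,\rho(n)$ whenever $\rho$ is eventually monotone; this is a standard Abel-summation / telescoping argument, using that $\rho$ decreasing gives $\sum_n(|B(n)|-|B(n-1)|)\rho(n) \leq \sum_n |B(n)|(\rho(n)-\rho(n+1)) + $ boundary terms, and similarly for the lower bound.

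For part (1), I would invoke the two-sided estimate $C_1 n^{k_0}\leq |B(n)|\leq C_2 n^{k_0}$ recorded in the Remark preceding the proposition (valid for $n\geq 1$, with $k_0\in\mathbb{N}$). Plugging this in,
\begin{equation*}
\sum_{g\in G}\frac{1}{(1+|g|)^{2\alpha}} \asymp \sum_{n\geq 1}\frac{n^{k_0}}{(1+n)^{2\alpha}} \asymp \sum_{n\geq 1}\frac{1}{n^{2\alpha - k_0}},
\end{equation*}
and the last series converges if and only if $2\alpha - k_0 > 1$, i.e. $\alpha > \frac{k_0+1}{2}$. Wait — I must be careful here: comparing with the ball sum $\sum_n |B(n)| n^{-2\alpha} \asymp \sum_n n^{k_0-2\alpha}$ converges iff $2\alpha - k_0 > 1$; but the equivalence above compared $\sum_g\rho(|g|)$ with the \emph{shell} sum $\sum_n |S(n)|\rho(n)$, and for polynomial growth $|S(n)| = |B(n)|-|B(n-1)|$ need not itself be $\asymp n^{k_0-1}$ in general (it is only bounded above by $C_2 n^{k_0}$ and its partial sums are $\asymp n^{k_0}$). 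So the clean route is the Abel-summation equivalence: $\sum_g \rho(|g|)<\infty \iff \sum_n |B(n)|(\rho(n)-\rho(n+1))<\infty$. With $\rho(n)=(1+n)^{-2\alpha}$ one has $\rho(n)-\rho(n+1)\asymp (1+n)^{-2\alpha-1}$, so this becomes $\sum_n n^{k_0}\cdot n^{-2\alpha-1} = \sum_n n^{k_0 - 2\alpha - 1}$, convergent iff $2\alpha+1-k_0>1$, i.e. $\alpha > \frac{k_0}{2}$, as claimed. I would present exactly this computation.

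For part (2), the input is the definition $\lambda = \lim_n |B(n)|^{1/n}$ together with the fact that $|B(n)|$ is submultiplicative, so by Fekete $\lambda = \inf_n |B(n)|^{1/n}$ and in particular $|B(n)|\geq \lambda^n$ for all $n$. If $t<\frac{\log\lambda}{2}$ then $e^{-2tn} > \lambda^{-n}$... more precisely $|B(n)|e^{-2tn}\geq \lambda^n e^{-2tn} = e^{n(\log\lambda - 2t)}\to\infty$, and a fortiori the shell sum diverges (one checks the shells carry a positive proportion of the ball mass, or simply notes $\sum_g e^{-2t|g|}\geq |B(n)|e^{-2tn}$ for each fixed $n$, which already $\to\infty$). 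Conversely if $t>\frac{\log\lambda}{2}$, pick $\eps>0$ with $\log\lambda + 2\eps < 2t$; since $\lambda = \lim |B(n)|^{1/n}$ there is $N$ with $|B(n)|\leq e^{n(\log\lambda+\eps)}$ for $n\geq N$, hence using the Abel-summation reduction (with $\rho(n)-\rho(n+1)\asymp e^{-2tn}$) the tail is dominated by $\sum_{n\geq N} e^{n(\log\lambda+\eps)}e^{-2tn} = \sum_{n\geq N} e^{-n(2t - \log\lambda - \eps)}$, a convergent geometric series. The main obstacle — really the only subtlety — is making the reduction from the group sum to the ball sum rigorous and uniform enough to be used in both directions and for both weight families at once; once that lemma is in hand, parts (1) and (2) are immediate geometric/$p$-series comparisons. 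I would state the reduction as a short opening paragraph of the proof and then dispatch the two cases in a few lines each.
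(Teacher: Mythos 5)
Your proposal is correct and follows essentially the same route as the paper: summation by parts over the spheres reduces the group sum to $\sum_n |B(n)|\bigl(\rho(n)-\rho(n+1)\bigr)$, and the two-sided ball estimate $|B(n)|\asymp n^{k_0}$ together with $\rho(n)-\rho(n+1)\asymp (1+n)^{-2\alpha-1}$ (mean value theorem) is exactly the paper's computation for part (1), while your explicit treatment of part (2) (which the paper dismisses as ``essentially the same''), using $|B(n)|\ge \lambda^n$ from Fekete for divergence and $|B(n)|\le e^{n(\log\lambda+\eps)}$ for convergence, is also sound. Just be sure the final write-up retains only the corrected Abel-summation comparison giving $\alpha>\frac{k_0}{2}$, not the initial shell-versus-ball slip that you flagged and fixed yourself.
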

\begin{proof}
We only consider the case (1) since the proof for (2) is essentially the same. Put $C(n):=\left \{g\in G:|g|=n \right \}$, the $n$-sphere, $c_n:=|C(n)|$ and $b_n := |B(n)|$. Then we have
	$$\sum_{g\in G}\frac{1}{(1+|g|)^{2\alpha}}=\sum_{n\geq 0}\frac{c_n}{(1+n)^{2\alpha}}.$$
By summation by parts we have
	$$\sum_{n\geq 0}\frac{c_n}{(1+n)^{2\alpha}}=\frac{b_N}{(1+N)^{2\alpha}}+\sum_{n\geq 0}^{N-1}b_n\left(\frac{1}{(n+1)^{2\alpha}}-\frac{1}{(n+2)^{2\alpha}}\right).$$
Recall that
	$$b_n \sim n^{k_0},\; n\ge 0$$
and by the mean value theorem we have
	$$\frac{1}{(n+1)^{2\alpha}}-\frac{1}{(n+2)^{2\alpha}} \sim \frac{2\alpha}{(n+1)^{2\alpha+1}},\; n\ge 0.$$
Then a standard summability criterion gives us the conclusion we wanted.

\end{proof}

\begin{thm}
Let $G$ be one of $\z^n$, a Coxeter group or a hyperbolic group, and $w_t$ and $W_\alpha$ are the functions from Example \ref{ex-PD-function} with the canonical generating set $S$.
	\begin{enumerate}
		\item
		Suppose $G$ is of polynomial growth of order $k_0$. The algebra $A_{W_\alpha}(G)$ is completely representable as an operator algebra if and only if $\alpha > \frac{k_0}{2}$.
		\item
		Suppose that $G$ is exponential growing with the growth rate $\lambda$. The algebra $A_{w_t}(G)$ is completely representable as an operator algebra if  $t> \frac{\log \lambda}{2}$ and $A_{w_t}(G)$ is not completely representable as an operator algebra if  $t< \frac{\log \lambda}{2}$.
	\end{enumerate}	
\end{thm}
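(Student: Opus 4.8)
The plan is to read off this theorem from Theorem~\ref{thm-main-A(G)} together with the counting Proposition established just above. Before doing so I would check that $w_t$ and $W_\alpha$ are admissible deformation functions in the sense of the relevant Definition: both are strictly positive on $G$, so the Lemma at the beginning of this section guarantees that the associated multipliers are injective with dense range; moreover $\|W_\alpha\|_{M_{cb}A(G)}\le 1$ and $\sup_{t>0}\|w_t\|_{M_{cb}A(G)}\le M<\infty$ by Example~\ref{ex-PD-function}, and for $G=\z^n$ or a Coxeter group $w_t$ is a normalized positive definite function with $\|w_t\|_{M_{cb}A(G)}=w_t(e)=1$. When the relevant norm is not exactly $1$ (the hyperbolic case), one simply replaces $w$ by $w/\|w\|_{M_{cb}A(G)}$; since $\widetilde{m_{cw}}=c\,\widetilde{m_w}$ for a positive scalar $c$, this rescaling induces a completely bounded algebra isomorphism with completely bounded inverse between $(A(G),\widetilde{m_{cw}})$ and $(A(G),\widetilde{m_w})$, and hence does not affect complete representability as an operator algebra. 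Thus $A_{w_t}(G)$ and $A_{W_\alpha}(G)$ are deformed Fourier algebras to which Theorem~\ref{thm-main-A(G)} applies.

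For part~(1), Theorem~\ref{thm-main-A(G)} says that $A_{W_\alpha}(G)$ is completely representable as an operator algebra if and only if $W_\alpha\in\ell^2(G)$. Since
	$$\|W_\alpha\|_{\ell^2(G)}^2=\frac{1}{M^2}\sum_{g\in G}\frac{1}{(1+|g|)^{2\alpha}},$$
part~(1) of the preceding Proposition shows that this is finite exactly when $\alpha>\frac{k_0}{2}$, which is the asserted equivalence.

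For part~(2), the same application of Theorem~\ref{thm-main-A(G)} reduces the problem to deciding whether $w_t\in\ell^2(G)$, that is, to the finiteness of
	$$\|w_t\|_{\ell^2(G)}^2=\sum_{g\in G}e^{-2t|g|}.$$
By part~(2) of the preceding Proposition this series converges when $t>\frac{\log\lambda}{2}$ and diverges when $t<\frac{\log\lambda}{2}$; hence $A_{w_t}(G)$ is completely representable as an operator algebra in the first case and fails to be so in the second.

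There is no genuinely difficult step: the whole content is carried by Theorem~\ref{thm-main-A(G)} and the counting Proposition, and the argument is essentially bookkeeping. The only points deserving a remark are the normalization of $w_t$ in the hyperbolic case, handled by the scaling observation above, and the fact that the critical value $t=\frac{\log\lambda}{2}$ is deliberately excluded in~(2) — this is unavoidable, since the exponential growth rate $\lambda$ pins down $|B(n)|$ only up to subexponential fluctuations and therefore does not determine the convergence of $\sum_{g\in G}e^{-2t|g|}$ at the borderline exponent.
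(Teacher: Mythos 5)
Your proposal is correct and follows exactly the route the paper intends: the theorem is obtained by combining Theorem~\ref{thm-main-A(G)} (complete representability of $A_w(G)$ is equivalent to $w\in\ell^2(G)$) with the summability Proposition for polynomially and exponentially growing groups, and your additional remarks on admissibility and the normalization of $w_t$ in the hyperbolic case are accurate but routine.
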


\begin{rem}
	\begin{enumerate}
		\item
		The above theorem tells us that the complete representability of $A_w(G)$ can precisely detect the growth rate of the underlying group when it is polynomially growing or exponentially growing. 
		\item
		We excluded the case of exponentially growing deformation measures in Section \ref{subsec-compact-connected} since the dual of compact connected Lie groups are always of polynomial growth (\cite{Ver}).
	\end{enumerate}
\end{rem}

{\bf Acknowledgement.}  The authors are grateful to Nico Spronk and Ebrahim Samei for their comments on defining deformed algebras.

\end{document}